\newtheorem{Theo}{Theorem}[section]
\newtheorem{Prop}[Theo]{Proposition}
\newtheorem{Coro}[Theo]{Corollary}
\newtheorem{Lemm}[Theo]{Lemma}
\theoremstyle{definition}
\newtheorem{Rema}[Theo]{Remark}
\newcommand{\IDT}{\mathbb{T}^{\infty}}
\newcommand{\N}{\mathbb{N}}
\newcommand{\HH}{\mathcal{H}}
\newcommand{\weak}{\operatorname{weak}}
\newcommand{\pr}{\mathfrak{p}}
\newcommand{\punkt}{\,\begin{picture}(-1,1)(-1,-3)\circle*{2.5}\end{picture}\;\; }
\newcommand{\unc}{\operatorname{unc}}
\newcommand{\Real}{\operatorname{Re}}
\begin{document}

\title{A note on abscissas of Dirichlet series}

\author{Andreas Defant\thanks{Insitut f\"ur Mathematik. Universit\"at Oldenburg. D-26111 Oldenburg (Germany)  defant@mathematik.uni-oldenburg.de Partially supported by MINECO MTM2017-83262-C2-1-P} 
\and 
Antonio P\'erez 
\thanks{Instituto de Ciencias Matem\'aticas (CSIC-UAM-UC3M-UCM). Campus Cantoblanco UAM, C. Nicol\'as Cabrera 13-15, 28049 Madrid (Spain) antonio.perez@icmat.es Supported by La Caixa Foundation, MINECO MTM2014-57838-C2-1-P and Fundaci\'on S\'eneca - Regi\'on de Murcia (CARM 19368/PI/14)}
\and 
Pablo Sevilla-Peris \thanks{Insituto Universitario de Matem\'atica Pura y Aplicada. Universitat Polit\`ecnica de Val\`encia. Cmno Vera s/n 46022 (Spain) psevilla@mat.upv.es Supported by MINECO MTM2017-83262-C2-1-P}}

\date{\emph{In memory of our friend Bernardo Cascales}}

\maketitle

\begin{abstract}
We present an abstract approach to the abscissas of convergence of vector-valued Dirichlet series. As a consequence we deduce that the abscissas for Hardy spaces of Dirichlet series are all equal. We also introduce and study weak versions of the abscissas for scalar-valued Dirichlet series.
\end{abstract}

\section{Introduction}

Given a complex Banach space $X$, a Dirichlet series with coefficients in $X$ is a formal sum
\[
\sum_{n} a_{n} n^{-s} \,,
\]
where $a_{n} \in X$ for every $n$.  We write $\mathfrak{D}(X)$ for the space of all such Dirichlet series. By a result of Jensen, if $D$ converges at some $s_{0} \in \mathbb{C}$ then it also converges at every point of
\[ \mathbb{C}_{s_{0}}\index{$\mathbb{C}_{0}$} := \{ s \in \mathbb{C} \colon \Real{s} > \Real{s_{0}} \} \,. 
\]
A proof of this fact in the scalar case is given in \cite[p. 97]{QuQu13} or \cite[Theorem~1.1]{DeSe_B}, and it extends in a straightforward way to the vector-valued setting. This means that natural regions of convergence for Dirichlet series are open halfspaces. There are three classical abscissas of convergence for a Dirichlet series $D=\sum a_{n} n^{-s}$
\begin{gather}
\sigma_{a}(D) = \inf{\big\{ \sigma \colon \textstyle\sum a_{n} n^{-s} \text{ converges absolutely on $\mathbb{C}_{\sigma}$ } \big\}} \notag\\
\sigma_{u}(D) = \inf{\big\{ \sigma \colon \textstyle\sum a_{n} n^{-s} \text{ converges uniformly on $\mathbb{C}_{\sigma}$} \big\}} \label{granota}\\
\sigma_{c}(D) = \inf{\big\{ \sigma \colon \textstyle\sum a_{n} n^{-s} \text{ converges pointwise on $\mathbb{C}_{\sigma}$} \big\}} \,, \notag
\end{gather}
H. Bohr in \cite{Bo13_Goett} started to study how far apart can $\sigma_{a}(D) $ and $\sigma_{u}(D)$ be for Dirichlet series with scalar coefficients. This was solved in \cite{BoHi31} (see also \cite[Theorem~4.1]{DeSe_B} or \cite[Theorem~5.4.2]{QuQu13}), where it 
was shown that 
\begin{equation} \label{collserola}
\sup \{ \sigma_{a}(D) - \sigma_{u}(D) \colon D \in \mathfrak{D} (\mathbb{C}) \} = \frac{1}{2} \,. 
\end{equation}
In recent years there has been a renewed interest in this subject. The three classical abscissas have been considered for Dirichlet series with coefficients in some Banach space in \cite{DeGaMaPG08}, and the 
corresponding vector-valued version of Bohr's problem was considered and solved. Also, new abscissas have been considered, as those defined for Hardy spaces of Dirichlet series \cite{Ba02, CaDeSe14}. Our aim in this
note is to present a general approach that allows a unified treatment of all these abscissas, and make a short detour throughout some well-known results on abscissas presenting a different approach with simplified and alternative arguments.

\section{Abscissas of convergence: abstract approach}\label{sec:abscissaUnif}

Some of these classical abscissas have been reformulated in terms of certain Banach spaces of Dirichlet series and, in this way, Functional Analysis has showed to be a useful tool in the study of convergence of scalar and vector valued Dirichlet series. A very good illustration of this interaction is the abscissa of uniform convergence. If we consider the Banach space of Dirichlet series
\begin{multline*}
\mathcal{D}_{\infty} (X) = \big\{ \textstyle\sum a_{n} n^{-s} \, \colon \,  \text{ the limit function }  \text{extends to a holomorphic,} \\  \text{bounded function on } \mathbb{C}_{0} \big\} \,,
\end{multline*}
endowed with the supremum  norm (over $\mathbb{C}_{0}$), then $\sigma_{u}(D)$ can be reformulated as
\begin{equation} \label{vaughan}
\sigma_{u}(D) = \inf \big\{ \sigma \, \colon \, \big( \sum_{n=1}^{N} \frac{a_{n}}{n^{\sigma}} n^{-s} \big)_{N}  \text{ converges in }  \mathcal{D}_{\infty} (X)  \big\} \,.
\end{equation}
A cornerstone of the theory is Bohr's fundamental Theorem (see \cite{QuQu13} for the proof of the scalar case and \cite{DeGaMaPG08} or \cite[Theorem~11.18]{DeSe_B} for the vector-valued version), that can be stated as
\begin{equation} \label{disseratevi}
\sigma_{u}(D) =  \inf \big\{ \sigma \, \colon \,  \sum \frac{a_{n}}{n^{\sigma}} n^{-s} \in   \mathcal{D}_{\infty} (X)  \big\} \,.
\end{equation}

With this in mind, we say that a Banach space $\mathfrak{X}(X)$ of Dirichlet series with coefficients in $X$ is \emph{admissible} if it contains all Dirichlet polynomials (finite sums) in $\mathfrak{D}(X)$ and there are constants $C_{1}, C_{2} > 0$ such that
\begin{equation}\label{equa:admissibleNorm} 
C_{1} \, \sup_{n}{\| a_{n} \|_{X}} \leq \Big\| \sum_{n}{a_{n} n^{-s}} \Big\|_{\mathfrak{X}(X)} \leq C_{2} \, \sum_{n}{\| a_{n} \|_{X}} 
\end{equation}
for every $\sum_{n}{a_{n} n^{-s}}$ in $\mathfrak{X}(X)$. \\
Let us note that the left-hand side inequality in \eqref{equa:admissibleNorm} ensures that if the sequence of partial sums of a Dirichlet series $\sum_{n}{a_{n} n^{-s}}$ converges in $\mathfrak{X}(X)$ then its limit is precisely 
the series itself. In this case we simply say that $\sum_{n}{a_{n} n^{-s}}$ is \textit{convergent} in $\mathfrak{X}(X)$. This can be seen as pointwise convergence under the suitable point of view, namely by considering the canonical inclusion
\[ \mathfrak{X}(X) \longrightarrow \mathfrak{D}(\mathfrak{X}(X))\,, \quad \sum_{n}{a_{n} n^{-s}} \longmapsto \sum_{n}{\widetilde{a_{n}} n^{-z}} \quad \mbox{ where $\widetilde{a_{n}} = a_{n} n^{-s} \in \mathfrak{X}(X)$}. \]
It is then clear that $\sum_{n}{a_{n} n^{-s}}$ is convergent in $\mathfrak{X}(X)$ if and only if $\sum_{n}{\widetilde{a_{n}} n^{-z}}$ converges pointwise at $z=0$. This simple observation yields, as a consequence of the vector-valued  version of Jensen's result mentioned in the introduction, that if $\sum_{n}{a_{n}n^{-s}}$ is convergent in $\mathfrak{X}(X)$, then $\sum_{n}{\frac{a_{n}}{n^{z}}n^{-s}}$ is also convergent in $\mathfrak{X}(X)$ whenever $\Real{(z)} > 0$. It is thus reasonable to define the abscissa of $\mathfrak{X}(X)$-convergence of $D=\sum_{n}{a_{n}n^{-s}}  \in \mathfrak{D}(X)$ as
\begin{equation} \label{mahogany}
\sigma_{\mathfrak{X}(X)}(D)  = \inf  \big\{ \sigma \in \mathbb{R} \colon \sum_{n} \frac{a_{n}}{n^{\sigma}} n^{-s} \text{ converges in } \mathfrak{X}(X)   \big\} \,
\end{equation}
where we agree to write $\inf{(\varnothing)}:=+\infty$ and $\inf{(\mathbb{R})} = - \infty$. 

\begin{Rema} \label{stlouis}
The classical abscissas from \eqref{granota} can be reformulated in this new language. To begin with, as we already noted in \eqref{vaughan} $\sigma_{u}$ is $\sigma_{\mathcal{D}_{\infty}(X)}$. Also, if we define $c(X)$ as the 
space of Dirichlet series that converge at $0$ with the norm $\big\Vert \sum a_{n} n^{-s} \big\Vert_{c(X)} = \sup_{N}{\big\Vert \sum_{n=1}^{N} a_{n} \big\Vert_{X}}$, then $\sigma_{c}$ is $\sigma_{c(X)}$. Finally, the space of all Dirichlet series with absolute summable coefficients (that we denote $\ell_{1}(X)$) endowed with the norm $\big\| \sum a_{n} n^{-s} \big\|_{\ell_{1}(X)} = \sum_{n=1}^{\infty} \| a_{n}\|_{X}$  gives the abscissa of absolute convergence.
\end{Rema}

A useful tool for scalar valued Dirichlet series are the Bohr-Cahen formulas (see \cite[Proposition~1.6]{DeSe_B} or \cite[Theorem~4.2.1]{QuQu13}). A careful analysis of the proof shows how to extend these formulas to our new abscissa.

\begin{Prop}\label{Prop:BasicEstimationsAbscissae}
Let $\mathfrak{X}(X)$ be an admissible Banach space of Dirichlet series with coefficients in a Banach space $X$. For every $D = \sum_{n}{a_{n}n^{-s}}$ in $\mathfrak{D}(X)$ we have then that:\\[-3mm]
\begin{gather}
\label{equa:abscissaPartialBounds}  \sigma_{\mathfrak{X}(X)}(D) = \inf{\{ \sigma \in \mathbb{R} \colon \mbox{$\sup_{N} \big\| \sum_{n=1}^{N}{\frac{a_{n}}{n^{\sigma}} n^{-s}}\big\|_{\mathfrak{X}(X)} < \infty$} \}}\\[2mm]
\label{BohrCahenGeneral} \sigma_{\mathfrak{X}(X)}(D) \leq \limsup_{N}{\frac{\log{\left\| \sum_{n=1}^{N}{a_{n} n^{-s}} \right\|_{\mathfrak{X}(X)} }}{\log{N}}  } 
\end{gather}
being the last relation an equality whenever the abscissa is non-negative. 
\end{Prop}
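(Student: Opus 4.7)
The plan is to derive both claims from Abel summation by parts, viewing partial sums of Dirichlet series as vectors in the Banach space $\mathfrak{X}(X)$; this is legitimate because admissibility forces $\mathfrak{X}(X)$ to contain all Dirichlet polynomials. I write $T_N := \sum_{n=1}^{N} a_n n^{-s}$ and $S_N(\sigma) := \sum_{n=1}^{N} (a_n / n^\sigma) n^{-s}$, both interpreted as elements of $\mathfrak{X}(X)$.

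For \eqref{equa:abscissaPartialBounds} only the direction ``$\geq$'' needs work; the other is trivial. Assume $\|S_N(\sigma)\|_{\mathfrak{X}(X)} \leq M$ for all $N$, and fix $\varepsilon > 0$. Summation by parts yields
\[
S_N(\sigma + \varepsilon) \;=\; \frac{S_N(\sigma)}{N^\varepsilon} + \sum_{n=1}^{N-1} S_n(\sigma) \left( \frac{1}{n^\varepsilon} - \frac{1}{(n+1)^\varepsilon} \right),
\]
and since the positive telescoping series $\sum_{n \geq 1} \big( n^{-\varepsilon} - (n+1)^{-\varepsilon} \big)$ is convergent, the partial sums $(S_N(\sigma + \varepsilon))_N$ form a Cauchy (hence convergent) sequence in $\mathfrak{X}(X)$. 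So $\sigma_{\mathfrak{X}(X)}(D) \leq \sigma + \varepsilon$ for every $\varepsilon > 0$, which proves \eqref{equa:abscissaPartialBounds}. The inequality in \eqref{BohrCahenGeneral} is essentially the same computation performed once more. Denote the right-hand side by $\gamma$, fix $\sigma > \gamma$, and pick $\gamma'$ with $\gamma < \gamma' < \sigma$ so that $\|T_N\|_{\mathfrak{X}(X)} \leq N^{\gamma'}$ eventually. Abel summation now gives
\[
S_N(\sigma) \;=\; \frac{T_N}{N^\sigma} + \sum_{n=1}^{N-1} T_n \left( \frac{1}{n^\sigma} - \frac{1}{(n+1)^\sigma} \right),
\]
and combining this with the mean-value estimate $|n^{-\sigma} - (n+1)^{-\sigma}| = O(n^{-\sigma-1})$ forces $\sup_N \|S_N(\sigma)\|_{\mathfrak{X}(X)} < \infty$. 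By the already-proven \eqref{equa:abscissaPartialBounds}, $\sigma_{\mathfrak{X}(X)}(D) \leq \sigma$, and letting $\sigma \downarrow \gamma$ closes the bound.

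For the converse inequality $\gamma \leq \sigma_{\mathfrak{X}(X)}(D)$ under the hypothesis $\sigma_{\mathfrak{X}(X)}(D) \geq 0$, the idea is to invert the Abel procedure. Pick any $\sigma > \sigma_{\mathfrak{X}(X)}(D)$---in particular $\sigma > 0$---and use \eqref{equa:abscissaPartialBounds} to obtain $\|S_N(\sigma)\|_{\mathfrak{X}(X)} \leq K$. Then
\[
T_N \;=\; N^\sigma S_N(\sigma) - \sum_{n=1}^{N-1} S_n(\sigma) \big( (n+1)^\sigma - n^\sigma \big),
\]
and because $\sigma > 0$ the weights $(n+1)^\sigma - n^\sigma$ are positive and telescope to $N^\sigma - 1$, so $\|T_N\|_{\mathfrak{X}(X)} \leq 2K N^\sigma$. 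Taking logarithms gives $\log \|T_N\|_{\mathfrak{X}(X)} / \log N \leq \sigma + o(1)$, whence $\gamma \leq \sigma$; letting $\sigma \downarrow \sigma_{\mathfrak{X}(X)}(D)$ finishes the argument. The delicate point---and precisely where the hypothesis $\sigma_{\mathfrak{X}(X)}(D) \geq 0$ enters---is this last inversion: for $\sigma < 0$ the telescoping weights change sign, the estimate $\|T_N\| \lesssim N^\sigma$ breaks down, and one can only recover the trivial bound $\gamma \leq 0$ in that regime.
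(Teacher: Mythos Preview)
Your proof is correct and follows essentially the same route as the paper's: Abel summation by parts applied three times, once for each claim. The only cosmetic difference is that for \eqref{BohrCahenGeneral} the paper shows directly that $(S_N(L+\varepsilon))_N$ is Cauchy in $\mathfrak{X}(X)$, whereas you show $\sup_N\|S_N(\sigma)\|<\infty$ and then invoke the already-proven \eqref{equa:abscissaPartialBounds}; the two arguments are interchangeable.
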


\begin{proof}
Both statements follow from Abel's summation formula, used in each case in a slightly different way. For the first one, let us choose $\sigma \in \mathbb{R}$ satisfying
\[ 
C:=\sup_{N}{\Big\| \sum_{n=1}^{N}{\frac{a_{n}}{n^{\sigma}} n^{-s}} \Big\|_{\mathfrak{X}(X)}} < \infty \,. 
\]
Now Abel's summation formula yields, for every $1 \leq M < N-1$ and every $\varepsilon > 0$,
\[
\sum_{n=M+1}^{N}{\frac{a_{n}}{n^{\sigma + \varepsilon+s}}} = \Big( \sum_{k=1}^{N}{\frac{a_{k}}{k^{\sigma+s}}} \Big) \frac{1}{N^{\varepsilon}} -  \Big( \sum_{k=1}^{M}{\frac{a_{k}}{k^{\sigma+s}}} \Big) \frac{1}{M^{\varepsilon}}+ \sum_{n=M}^{N-1}\Big( \sum_{k=1}^{n}{\frac{a_{k}}{k^{\sigma+s}}} \Big) \Big[\frac{1}{n^{\varepsilon}} - \frac{1}{(n+1)^{\varepsilon}}\Big] \,. 
\]
Taking norms in the previous expression we get
\[
\Big\| \sum_{n=M+1}^{N}{\frac{a_{n}}{n^{\sigma + \varepsilon}} n^{-s}} \Big\|_{\mathfrak{X}(X)} \leq \frac{C}{N^{\varepsilon}} + \frac{C}{M^{\varepsilon}} + \sum_{n=M}^{N-1}{C \Big[ \frac{1}{n^{\varepsilon}} - \frac{1}{(n+1)^{\varepsilon}}\Big]} = \frac{2 C}{M^{\varepsilon}}
\]
and thus $\sigma_{\mathfrak{X}(X)}(D) \leq \sigma + \varepsilon$. This proves 
\[
\sigma_{\mathfrak{X}(X)}(D) \leq \inf{\{ \sigma \in \mathbb{R} \colon \mbox{$\sup_{N} \big\| \sum_{n=1}^{N}{\frac{a_{n}}{n^{\sigma}} n^{-s}}\big\|_{\mathfrak{X}(X)} < \infty$} \}} \,,
\]
The reverse inequality is clear, and this gives \eqref{equa:abscissaPartialBounds}.\\
For the second statement, let $L$ be the superior limit in \eqref{BohrCahenGeneral}, which we can assume to be finite. Using Abel's summation formula in a slightly different way, 
\[
\sum_{n=M+1}^{N}{\frac{a_{n}}{n^{L + \varepsilon+s}}} = \Big( \sum_{k=1}^{N}{\frac{a_{k}}{k^{s}}} \Big) \frac{1}{N^{L+\varepsilon}} -  \Big( \sum_{k=1}^{M}{\frac{a_{k}}{k^{s}}} \Big) \frac{1}{M^{L+\varepsilon}}+ \sum_{n=M}^{N-1}\Big( \sum_{k=1}^{n}{\frac{a_{k}}{k^{s}}} \Big) \Big[\frac{1}{n^{L+\varepsilon}} - \frac{1}{(n+1)^{L+\varepsilon}}\Big] \,
\]
for every $1 \leq M < N-1$ and $\varepsilon >0$. 
By the very definition of $L$, we can find $n_{0}$ so that, for all $n \geq n_{0}$, we have
$\big\Vert \sum_{k=1}^{n} a_{n} k^{-s} \big\Vert_{\mathfrak{X}(X)} \leq n^{L+ \varepsilon/2}$. Therefore
\[
\Big\Vert \sum_{n=M+1}^{N} \frac{a_{n}}{n^{L+\varepsilon}} n^{-s} \Big\Vert_{\mathfrak{X}(X)}
\leq \frac{1}{N^{\frac{\varepsilon}{2}}} +  \frac{1}{M^{\frac{\varepsilon}{2}}}
+ \vert L + \varepsilon \vert \sum_{n=M}^{N-1} \frac{1}{n^{1+\frac{\varepsilon}{2}}} \,
\]
for all $N>M +1 >n_{0}$.
Obviously $n_{0}$ can be chosen to make the last term as small as we want. This shows that the series converges in $\mathfrak{X}(X)$ and completes the proof of \eqref{BohrCahenGeneral}.\\
Finally, if $\sigma_{\mathfrak{X}(X)}(D) \geq 0$, we take $\varepsilon >0$ and set $\sigma_{0} = \sigma_{\mathfrak{X}(X)}(D) + \varepsilon$. Then $\big( \sum_{n=1}^{N}  \frac{a_{n}}{n^{\sigma_{0}}} n^{-s} \big)_{N}$ is convergent in $\mathfrak{X}(X)$ and hence norm bounded by, say, $B>0$. Abel's summation (once again) gives
\[
\sum_{n=1}^{N} a_{n} n^{-s} = \Big(\sum_{n=1}^{N} \frac{a_{n}}{n^{\sigma_{0}+s}} \Big) N^{\sigma_{0}}
+ \sum_{n=1}^{N-1} \Big( \sum_{k=1}^{n} \frac{a_{k}}{k^{\sigma_{0}+s}} \Big) \big( n^{\sigma_{0}} - (n+1)^{\sigma_{0}} \big)
\]
and
\[
\Big\Vert \sum_{n=1}^{N} a_{n} n^{-s} \Big\Vert_{\mathfrak{X}(X)} \leq 2 B N^{\sigma_{0}} \,.
\]
This implies $L\leq \sigma_{0}=\sigma_{\mathfrak{X}(X)}(D) + \varepsilon$ and completes the proof.
\end{proof}

\subsection{Strips between abscissas}

Let $\mathfrak{X}_{1}(X)$, $\mathfrak{X}_{2}(X)$ be admissible Banach spaces of Dirichlet series. If there is a constant $C>0$ such that $\| P\|_{\mathfrak{X}_{2}(X)} \leq C \| P\|_{\mathfrak{X}_{1}(X)}$ for every Dirichlet polynomial $P$ of $\mathfrak{D}(X)$, then by Proposition \ref{Prop:BasicEstimationsAbscissae} we easily deduce
\[ 
\sigma_{\mathfrak{X}_{2}(X)}(D) \leq \sigma_{\mathfrak{X}_{1}(X)}(D) \quad \mbox{ for every } D \in \mathfrak{X}_{2}(X). \, 
\]
Then it is reasonable to ask for the exact value of the maximum possible width of the strip determined by both abscissas, namely 
\[ 
S(\mathfrak{X}_{1}(X), \mathfrak{X}_{2}(X)) :=
\sup{\{ \sigma_{\mathfrak{X}_{1}(X)}(D) - \sigma_{\mathfrak{X}_{2}(X)}(D)\colon D \in \mathfrak{D}(X) \}} \,. 
\]
Notice that, from the definition of admissible space of Dirichlet series, we have (here $\ell_{\infty}(X)$ denotes the space of bounded sequences in $X$)
\[ \
\sigma_{\ell_{\infty}(X)}(D) \leq \sigma_{\mathfrak{X}(X)}(D) \leq \sigma_{\ell_{1}(X)}(D)  \quad \mbox{ for every } D \in \mathfrak{X}(X). 
\]
As a straightforward consequence, we have that the abscissas of convergence are all simultaneously infinite or finite, being in the last case $\sigma_{\ell_{1}(X)} (D) \leq \sigma_{\ell_{\infty}}(D) + 1$.\\
As we already noted in \eqref{collserola}, Bohr's original problem (finally solved by Bohnenblust and Hille) was to determine the maximal distance between the abscissas of absolute and uniform convergence of Dirichlet series with scalar coefficients:
\[
S(\ell_{1}(\mathbb{C}), \mathcal{D}_{\infty}(\mathbb{C})) = \sup{\{ \sigma_{a}(D) - \sigma_{u}(D) \colon D \in \mathfrak{D}(\mathbb{C})\}} = \frac{1}{2} \,. 
\]
This was extended in \cite{DeGaMaPG08}, where the corresponding problem was considered for Dirichlet series with coefficients in some Banach space $X$, showing that
\begin{equation} \label{tempestade}
S(\ell_{1}(X), \mathcal{D}_{\infty}(X)) = 1 - \frac{1}{\cot(X)}  \,
\end{equation} 
where $\cot{(X)}$ is the infimum of all  $q \geq 2$ such that $X$ has cotype $q$ if $X$ has finite cotype, and $\cot{(X)}=\infty$ if $X$ has no finite cotype (see e.g. \cite{DiJaTo95} for the definition of cotype).

We aim now a useful result for the comparison of abscissas which extends a previous result of Maurizi and Queff\'{e}lec \cite[Theorem~2.4]{MaQu10}. First we need to introduce a new condition inspired by \eqref{disseratevi}: we say that an admissible Banach space of Dirichlet series $\mathfrak{X}(X)$ satisfies \emph{Bohr's fundamental Theorem} if for every $D \in \mathfrak{X}(X)$ it holds that $\sigma_{\mathfrak{X}(X)}(D) \leq 0$, or equivalently, if
\[ 
\sigma_{\mathfrak{X} (X)}(D) = \inf \big\{ \sigma \in \mathbb{R} \colon \sum \frac{a_{n}}{n^{\sigma}} n^{-s} \in \mathfrak{X}(X) \big\} \,
\]
for every $D=\sum a_{n}n^{-s} \in\mathfrak{X} (X)$.

\begin{Prop}\label{Prop:stripMauQueff}
Let $\mathfrak{X}_{1}(X), \mathfrak{X}_{2}(X)$ be two admissible Banach spaces of Dirichlet series with coefficients in $X$, where $ \mathfrak{X}_{2}(X)$ satisfies Bohr's fundamental Theorem. Then, for every $\delta > 0$ the following assertions are equivalent:
\begin{enumerate}
\item[(i)] \label{MaQu1} $\sigma_{\mathfrak{X}_{1}(X)}(D) \leq \sigma_{\mathfrak{X}_{2}(X)}(D) + \delta$ for every Dirichlet series $D$ in $\mathfrak{D}(X)$.\\[-2mm]
\item[(ii)] \label{MaQu2} For each $\sigma > \delta$ there is $C_{\sigma} > 0$ such that, for every $a_{1}, \ldots, a_{N} \in X$ and $N \in \N$
\begin{equation}\label{equa:polynomialStrip}
\Big\| \sum_{n=1}^{N} a_{n} n^{-s} \Big\|_{\mathfrak{X}_{1}(X)} \leq C_{\sigma}\, N^{\sigma} \, \Big\| \sum_{n=1}^{N} a_{n}n^{-s} \Big\|_{\mathfrak{X}_{2}(X)}\,.
\end{equation}
\end{enumerate}
\end{Prop}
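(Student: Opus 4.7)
The plan is to prove $(ii)\Rightarrow(i)$ via the Bohr-Cahen formula \eqref{BohrCahenGeneral} and $(i)\Rightarrow(ii)$ via two applications of the closed graph theorem combined with Abel summation.

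For $(ii)\Rightarrow(i)$: given $D=\sum a_{n}n^{-s}$, fix $\tau>\sigma_{\mathfrak{X}_{2}(X)}(D)$; by \eqref{equa:abscissaPartialBounds}, $M:=\sup_{N}\|\sum_{n=1}^{N}\frac{a_{n}}{n^{\tau}}n^{-s}\|_{\mathfrak{X}_{2}(X)}$ is finite, and (ii) applied to each polynomial $\sum_{n=1}^{N}\frac{a_{n}}{n^{\tau}}n^{-s}$ yields $\|\sum_{n=1}^{N}\frac{a_{n}}{n^{\tau}}n^{-s}\|_{\mathfrak{X}_{1}(X)}\leq C_{\sigma}M N^{\sigma}$ for every $\sigma>\delta$. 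Feeding this into \eqref{BohrCahenGeneral} applied to the Dirichlet series $\sum\frac{a_{n}}{n^{\tau}}n^{-s}$ in $\mathfrak{X}_{1}(X)$ (whose $\mathfrak{X}_{1}(X)$-abscissa equals $\sigma_{\mathfrak{X}_{1}(X)}(D)-\tau$) gives $\sigma_{\mathfrak{X}_{1}(X)}(D)\leq\tau+\sigma$; letting $\tau\downarrow\sigma_{\mathfrak{X}_{2}(X)}(D)$ and $\sigma\downarrow\delta$ completes this implication.

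For $(i)\Rightarrow(ii)$: fix a target $\sigma^{*}>\delta$ and pick $\sigma_{0}\in(\delta,\sigma^{*})$. The shift operator $T_{\sigma_{0}}\colon\mathfrak{X}_{2}(X)\to\mathfrak{X}_{1}(X)$, $\sum a_{n}n^{-s}\mapsto\sum\frac{a_{n}}{n^{\sigma_{0}}}n^{-s}$, is well defined because Bohr's fundamental theorem for $\mathfrak{X}_{2}(X)$ and (i) give $\sigma_{\mathfrak{X}_{1}(X)}(D)\leq\delta<\sigma_{0}$ for every $D\in\mathfrak{X}_{2}(X)$; the coefficient bound in \eqref{equa:admissibleNorm} makes its graph closed, so the closed graph theorem furnishes $K_{\sigma_{0}}$ with $\|T_{\sigma_{0}}Q\|_{\mathfrak{X}_{1}(X)}\leq K_{\sigma_{0}}\|Q\|_{\mathfrak{X}_{2}(X)}$. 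Applied to $Q=\sum_{n=1}^{N}a_{n}n^{\sigma_{0}}n^{-s}$, which satisfies $T_{\sigma_{0}}Q=\sum_{n=1}^{N}a_{n}n^{-s}$, this reads
\[
\Big\|\sum_{n=1}^{N}a_{n}n^{-s}\Big\|_{\mathfrak{X}_{1}(X)}\leq K_{\sigma_{0}}\Big\|\sum_{n=1}^{N}a_{n}n^{\sigma_{0}}n^{-s}\Big\|_{\mathfrak{X}_{2}(X)},
\]
and the task becomes dominating the right-hand side by $CN^{\sigma^{*}}\|\sum_{n=1}^{N}a_{n}n^{-s}\|_{\mathfrak{X}_{2}(X)}$.

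This will require a polynomial partial-sum estimate in $\mathfrak{X}_{2}(X)$. Fix $\varepsilon\in(0,\sigma^{*}-\sigma_{0})$; a second closed graph argument applied to the map $D\mapsto(\sum_{n=1}^{M}\frac{a_{n}}{n^{\varepsilon}}n^{-s})_{M}\in\ell_{\infty}(\mathfrak{X}_{2}(X))$ (well defined by \eqref{equa:abscissaPartialBounds} together with Bohr's fundamental theorem, closed by \eqref{equa:admissibleNorm}) yields $\sup_{M}\|\sum_{n=1}^{M}\frac{a_{n}}{n^{\varepsilon}}n^{-s}\|_{\mathfrak{X}_{2}(X)}\leq K'_{\varepsilon}\|D\|_{\mathfrak{X}_{2}(X)}$. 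An Abel summation with weights $n^{\varepsilon}$ converts this into the polynomial partial-sum bound $\|\sum_{n=1}^{M}a_{n}n^{-s}\|_{\mathfrak{X}_{2}(X)}\leq 2 K'_{\varepsilon}M^{\varepsilon}\|P\|_{\mathfrak{X}_{2}(X)}$ for every $M\leq N$ and every polynomial $P=\sum_{n=1}^{N}a_{n}n^{-s}$; a second Abel summation with weights $n^{\sigma_{0}}$, fed with this estimate, yields $\|\sum_{n=1}^{N}a_{n}n^{\sigma_{0}}n^{-s}\|_{\mathfrak{X}_{2}(X)}\leq C'N^{\sigma_{0}+\varepsilon}\|P\|_{\mathfrak{X}_{2}(X)}$, and combining with the $T_{\sigma_{0}}$-bound together with $\sigma_{0}+\varepsilon\leq\sigma^{*}$ produces (ii). The hardest step I expect is precisely this last chain: the abstract continuity of $T_{\sigma_{0}}$ alone does not recover a polynomial inequality with sharp exponent, and one must use Bohr's fundamental theorem for $\mathfrak{X}_{2}(X)$ to generate polynomial partial-sum estimates with arbitrarily small growth $M^{\varepsilon}$ and then orchestrate the two Abel summations so that the intermediate exponent $\sigma_{0}+\varepsilon$ can be pushed below any preassigned $\sigma^{*}>\delta$.
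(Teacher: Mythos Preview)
Your proof is correct. For $(ii)\Rightarrow(i)$ you follow essentially the same Bohr--Cahen route as the paper; your device of shifting by $\tau>\sigma_{\mathfrak{X}_{2}(X)}(D)$ before applying \eqref{BohrCahenGeneral} neatly sidesteps the case distinction ($\sigma_{\mathfrak{X}_{2}(X)}(D)\geq 0$ versus $<0$) that the paper carries out, but the underlying idea is identical.

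For $(i)\Rightarrow(ii)$ your route is genuinely different and noticeably more elaborate than the paper's. The paper applies the uniform boundedness principle directly to the \emph{truncated} shift operators
\[
P_{N}\colon\mathfrak{X}_{2}(X)\longrightarrow\mathfrak{X}_{1}(X),\qquad D\longmapsto\sum_{n\leq N}\frac{a_{n}}{n^{\sigma}}\,n^{-s};
\]
pointwise boundedness of $(P_{N})_{N}$ follows exactly from Bohr's fundamental theorem for $\mathfrak{X}_{2}(X)$ together with (i), and $\sup_{N}\|P_{N}\|<\infty$ already gives $\bigl\|\sum_{n\leq N}\frac{a_{n}}{n^{\sigma}}n^{-s}\bigr\|_{\mathfrak{X}_{1}(X)}\leq C_{\sigma}\|D\|_{\mathfrak{X}_{2}(X)}$, so that a \emph{single} Abel summation finishes. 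You instead bound the \emph{full} shift $T_{\sigma_{0}}$ via the closed graph theorem, which leaves you with the auxiliary task of controlling $\bigl\|\sum_{n=1}^{N}a_{n}n^{\sigma_{0}}n^{-s}\bigr\|_{\mathfrak{X}_{2}(X)}$ in terms of $\bigl\|\sum_{n=1}^{N}a_{n}n^{-s}\bigr\|_{\mathfrak{X}_{2}(X)}$; this is what forces your second closed-graph argument (into $\ell_{\infty}(\mathfrak{X}_{2}(X))$) and the two nested Abel summations. Both approaches rest on a Baire-category principle plus Abel summation, but the paper's choice of operator family delivers the polynomial inequality with considerably less bookkeeping; on the other hand, your argument isolates as a by-product the useful internal partial-sum estimate $\bigl\|\sum_{n\leq M}a_{n}n^{-s}\bigr\|_{\mathfrak{X}_{2}(X)}\leq C_{\varepsilon}M^{\varepsilon}\|P\|_{\mathfrak{X}_{2}(X)}$ valid in any admissible space satisfying Bohr's fundamental theorem.
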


\begin{proof}
(i) $\Rightarrow$ (ii): Fixed $\sigma > \delta$, consider the sequence of bounded operators
\[ 
P_{N}: \mathfrak{X}_{2}(X) \longrightarrow \mathfrak{X}_{1}(X)\,, \quad \sum_{n}{a_{n} n^{-s}} \longmapsto \sum_{n \leq N}{\frac{a_{n}}{n^{\sigma}} n^{-s}} 
\]
For each $D \in \mathfrak{X}_{2}(X)$ we have that $\sigma_{\mathfrak{X}_{2}(X)}(D) \leq 0$ as  $\mathfrak{X}_{2}(X)$ satisfies Bohr's fundamental Theorem. Using (i) this yields that $\sigma_{\mathfrak{X}_{1}(X)}(D) < \sigma$, so $P_{N}(D)$ is convergent and hence bounded. Applying the uniform boundedness principle we deduce that $C_{\sigma}:= \sup_{N}{\| P_{N}\|} < \infty$, so that for every $\sum_{n}{a_{n} n^{-s}}$ in $\mathfrak{X}_{2}(X)$ it holds that
\[ 
\Big\| \sum_{n \leq N}{\frac{a_{n}}{n^{\sigma}} n^{-s}} \Big\|_{\mathfrak{X}_{1}(X)} \leq C_{\sigma} \Big\| \sum_{n}{a_{n} n^{-s}} \Big\|_{\mathfrak{X}_{2}(X)} \,. 
\]
Finally, using Abel's summation formula we get
\begin{multline*} 
\Big\| \sum_{n = 1}^{N}{\frac{a_{n}}{n^{s}}} \Big\|_{\mathfrak{X}_{1}(X)}  \leq \sum_{n=1}^{N-1}{\Big\| \sum_{k=1}^{n}{\frac{a_{k}}{k^{\sigma + s}}} \Big\|_{\mathfrak{X}_{1}(X)} [(n+1)^{\sigma} - n^{\sigma}]} + \Big\| \sum_{k=1}^{N}{\frac{a_{k}}{k^{\sigma + s}}}\Big\|_{\mathfrak{X}_{2}(X)} \, N^{\sigma}\\
 \leq C_{\sigma} \, \Big\|  \sum_{n=1}^{N}{\frac{a_{n}}{n^{s}}} \Big\|_{\mathfrak{X}_{2}(X)}  \sum_{n=1}^{N-1} \big[(n+1)^{\sigma} - n^{\sigma} \big] + N^{\sigma} \leq 2 \, C_{\sigma}  \, N^{\sigma} \, \Big\|  \sum_{n=1}^{N}{\frac{a_{n}}{n^{s}}} \Big\|_{\mathfrak{X}_{2}(X)} \,.
\end{multline*}
Assume now that \eqref{equa:polynomialStrip} holds and fix $D=\sum_{n}{a_{n}n^{-s}}$ in $\mathfrak{D}(X)$ with finite abscissas. Suppose first that $\sigma_{\mathfrak{X}_{2}(X)}(D) \geq 0$. Taking logarithm on both sides of \eqref{equa:polynomialStrip} it immediately follows that
\[ 
\frac{1}{\log{N}} \log \Big\| \sum_{n=1}^{N}{\frac{a_{n}}{n^{s}}} \Big\|_{\mathfrak{X}_{1}(X)} \leq \frac{\log{C_{\sigma}}}{\log{N}} + \sigma + \frac{1}{\log{N}} \log \Big\| \sum_{n=1}^{N}{\frac{a_{n}}{n^{s}}} \Big\|_{\mathfrak{X}_{2}(X)} 
\]
and Proposition~\ref{Prop:BasicEstimationsAbscissae} immediately gives 
\[
\sigma_{\mathfrak{X}_{1}(X)}(D) \leq \sigma_{\mathfrak{X}_{2}(X)}(D) + \sigma \,.
\] 
Finally, if $\sigma_{\mathfrak{X}_{2}(X)}(D) < 0$ we consider $\alpha \in \mathbb{R}$ with $\alpha < \sigma_{\mathfrak{X}_{2}(X)}(D)$ so that $\sigma_{\mathfrak{X}_{2}(X)}\big( \sum \frac{a_{n}}{n^{\alpha}} n^{-s} \big) = \sigma_{\mathfrak{X}_{2}(X)}(D) - \alpha > 0$. Applying the previous argument to the series  $\sum \frac{a_{n}}{n^{\alpha}} n^{-s} $ we immediately get the result.
\end{proof}

As a consequence of the previous proposition, if $\mathfrak{X}_{1}(X)$ and $\mathfrak{X}_{2}(X)$ are admissible Banach spaces such that  $\sigma_{\mathfrak{X}_{2}(X)} (D) \leq \sigma_{\mathfrak{X}_{1}(X)}(D)$ for every $D \in \mathfrak{D}(X)$, then we can write
\begin{multline*}
S(\mathfrak{X}_{1}(X), \mathfrak{X}_{2}(X))\\ =
\inf \big\{ \sigma \in \mathbb{R} \colon  \exists C_{\sigma} \text{ s.t. } \Big\| \sum_{n=1}^{N} a_{n} n^{-s} \Big\|_{\mathfrak{X}_{2}(X)} \leq C_{\sigma}\, N^{\sigma} \, \Big\| \sum_{n=1}^{N} a_{n}n^{-s} \Big\|_{\mathfrak{X}_{1}(X)} \\
\text{ for all } a_{1}, \ldots, a_{N} \in X \text{ and } N \in \mathbb{N} \big\}
\end{multline*}

\subsection{Abscissas of Hardy spaces}

Hardy spaces of Dirichlet series were first considered in \cite{Ba02} for series with scalar coefficients and in \cite{CaDeSe14} for series with coefficients in some Banach space. A way to define these spaces is through Hardy spaces of vector-valued functions. Let us roughly sketch how this is done. We denote by $\mathbb{N}_{0}^{(\mathbb{N})}$ the set of all finite sequences of elements in $\N_{0}:=\N \cup \{ 0\}$. Then, to each $\alpha = (\alpha_{1}, \ldots, \alpha_{N}) \in \mathbb{N}_{0}^{(\mathbb{N})}$ corresponds a (unique) natural number $n = \pr_{1}^{\alpha_{1}} \cdots \pr_{N}^{\alpha_{N}} =: \pr^{\alpha}$ (here $\pr=(\pr_{k})$ denotes the sequence of prime numbers). With this idea we define a bijection between $\mathfrak{P}(X)$ (the space of all formal power series in infinitely many variables and with coefficients in $X$) and $\mathfrak{D}(X)$ 
\begin{equation} \label{araceli}
\begin{array}{cccc}
\mathfrak{B}_{X} : \quad & \,\,\mathfrak{P}(X) & \xrightarrow{\text{\hspace{2cm}}} & \mathfrak{D} (X)\, \\
& \,\, \textstyle\sum c_{\alpha} z^{\alpha} \,\, &
\xrightarrow{\;a_{\pr^{\alpha}} = c_{\alpha}\;}\,\, &
\textstyle\sum a_{n} n^{-s}
\end{array}
\end{equation}
that we call \textit{Bohr transform}. By $\mathbb{T}^{\infty}$ we denote the (infinite) countable product of the torus $\mathbb{T}$ with itself, which is again a compact group. Now, given a Banach space $X$ and $1 \leq p \leq \infty$ we consider $H_{p}(\mathbb{T}^{\infty},X)$, the (closed) subspace of $L_{p}(\mathbb{T}^{\infty},X)$ consisting of those 
functions whose Fourier coefficients $\widehat{f}(\alpha)$ are different from zero only if $\alpha \in \mathbb{N}_{0}^{(\mathbb{N})}$. Associating to each $f \in H_{p}(\mathbb{T}^{\infty},X)$ the formal power series $\sum_{\alpha \in \mathbb{N}_{0}^{(\mathbb{N})}} \widehat{f}(\alpha) z^{\alpha}$, this can be seen as a subspace of $\mathfrak{P}(X)$ and we define the Hardy space of Dirichlet series as (see \cite{CaDeSe14} or \cite{DePe16} for more details)
\[
\mathcal{H}_{p} (X) = \mathfrak{B}_{X} ( H_{p} (\mathbb{T}^{\infty} , X)  ) 
\]
with the norm induced by the identification. This is an admissible Banach space of Dirichlet series and the corresponding abscissa \eqref{mahogany} was already defined in \cite{CaDeSe14}. Our aim now is to show that, in 
fact, all these abscissas are equal for all $1 \leq p < \infty$ (Theorem~\ref{hypercontractivityabs}). This is going to follow from an independently interesting result (Theorem~\ref{geyrenhoff}): if we translate a Dirichlet series in some $\mathcal{H}_{p}(X)$ a little bit to the right, then it
belongs to every $\mathcal{H}_{q}(X)$ for every $p<q$. The arguments are basically the same as the ones for the scalar case that can be found in \cite{Ba02}. \\
We reformulate our problem in terms of functions on $\mathbb{T}^{\infty}$. We begin with a simple observation in the one dimensional torus. Given $0 \leq r <1$ and $1 \leq p, q \leq \infty$, we have a bounded linear operator
\begin{equation} \label{kakabsa}
T_{r}: H_{p}(\mathbb{T}, X) \longrightarrow H_{q}(\mathbb{T}, X) \,, \quad \sum_{n \geq 0}{\widehat{f}(n) z^{n}} \longmapsto \sum_{n \geq 0}{\widehat{f}(n) r^{n}z^{n}} 
\end{equation}
with norm $\| T_{r}\| \leq \frac{1}{1-r}$. The following proposition extends \eqref{kakabsa}  to the infinite dimensional torus for finite values of $p$.

\begin{Prop} \label{ruthmayer}
Let $r = (r_{k}) \in [0,1)^{\N}$  and $1 \leq p \leq q < \infty$ be such that the operators $T_{r_{k}}$ defined in \eqref{kakabsa} satisfy $C_{r}:=\sup_{n} \prod_{k=1}^{n} \Vert T_{r_{k}} \Vert < \infty$. Then, there exists a unique operator 
\[ 
T_{r} : H_{p} (\mathbb{T}^{\infty}, X) \longrightarrow H_{q} (\mathbb{T}^{\infty}, X)
\]
such that $T_{r} \big(\sum_{\alpha} c_{\alpha} w^{\alpha} \big) = \sum_{\alpha} c_{\alpha} (rw)^{\alpha}$ for every polynomial, which moreover satisfies that $\Vert T_{r} \Vert \leq C_{r}$.
\end{Prop}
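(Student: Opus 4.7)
Since $q < \infty$, the polynomials $P = \sum_{\alpha} c_{\alpha} w^{\alpha}$ (finite sums, depending on only finitely many variables) are dense in $H_{p}(\mathbb{T}^{\infty}, X)$. The plan is to define $T_{r}$ on polynomials by the stated formula $T_{r} P := \sum_{\alpha} c_{\alpha} (rw)^{\alpha}$; this is itself a polynomial with only non-negative multi-indices, so it lies in $H_{q}(\mathbb{T}^{\infty}, X)$. Thus everything reduces to proving the norm estimate $\|T_{r} P\|_{L_{q}(\mathbb{T}^{\infty}, X)} \leq C_{r} \|P\|_{L_{p}(\mathbb{T}^{\infty}, X)}$ for every polynomial $P$; once this is in hand, $T_{r}$ extends uniquely by continuity, and uniqueness is automatic from density.

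Fix $P$ depending on only $w_{1}, \ldots, w_{n}$, and write $T_{r_{k}}^{(k)}$ for the one-dimensional operator \eqref{kakabsa} acting in the single variable $w_{k}$, so that $T_{r} P = T_{r_{n}}^{(n)} \circ \cdots \circ T_{r_{1}}^{(1)} P$. Setting $Q_{k} := T_{r_{k}}^{(k)} \circ \cdots \circ T_{r_{1}}^{(1)} P$ (with $Q_{0} = P$), I will prove by induction on $k \in \{0, 1, \ldots, n\}$ the mixed-norm estimate
\[
\|Q_{k}\|_{L_{q}(\mathbb{T}^{k};\, L_{p}(\mathbb{T}^{n-k}, X))} \leq \prod_{j=1}^{k} \|T_{r_{j}}\| \cdot \|P\|_{L_{p}(\mathbb{T}^{n}, X)}.
\]
For the step from $k$ to $k+1$, view the $w_{k+1}$-slice with values in the Banach space $Y := L_{p}(\mathbb{T}^{n-k-1}, X)$: for each fixed $(w_{1}, \ldots, w_{k})$, the section $Q_{k}(w_{1}, \ldots, w_{k}, \cdot, \cdot)$ is a polynomial in $H_{p}(\mathbb{T}; Y)$, and the $Y$-valued version of \eqref{kakabsa} gives
\[
\|Q_{k+1}(w_{1}, \ldots, w_{k}, \cdot, \cdot)\|_{L_{q}(\mathbb{T}; Y)} \leq \|T_{r_{k+1}}\| \cdot \|Q_{k}(w_{1}, \ldots, w_{k}, \cdot, \cdot)\|_{L_{p}(\mathbb{T}; Y)}.
\]
Raising to the $q$-th power and integrating in $(w_{1}, \ldots, w_{k})$, then identifying $L_{q}(\mathbb{T}^{k}; L_{q}(\mathbb{T}; Y)) = L_{q}(\mathbb{T}^{k+1}; Y)$ via Fubini, closes the induction. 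Specialising to $k = n$ turns the left-hand side into $\|T_{r} P\|_{L_{q}(\mathbb{T}^{n}, X)}$ and the right-hand side into at most $C_{r} \|P\|_{L_{p}(\mathbb{T}^{n}, X)}$, which is the desired bound.

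The delicate point, which I expect to be the main hurdle, is ensuring that the constant appearing in the $Y$-valued form of \eqref{kakabsa} at each inductive step is still $\|T_{r_{k+1}}\|$ (the operator norm from the hypothesis, computed between $X$-valued spaces) and not some a priori larger quantity. This is legitimate because $T_{r_{k+1}}$ acts as convolution with the fixed \emph{scalar} kernel $z \mapsto \sum_{n \geq 0} r_{k+1}^{n} z^{n}$; its $L_{p} \to L_{q}$ norm is computed from Young's inequality applied to that kernel and, by a standard duality/tensor argument, is insensitive to the Banach space in which the functions take values. Granted this, the iteration proceeds without further obstacle.
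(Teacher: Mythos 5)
Your architecture coincides with the paper's: establish the norm estimate for polynomials by acting one variable at a time, then extend by density (which is available because $p\le q<\infty$, so in particular $p<\infty$, the exponent that actually matters for density in the domain). Your mixed-norm induction is a correct formalization of that iteration. The gap is precisely at the step you single out as delicate, and the justification you offer for it does not work. The claim that the $L_p\to L_q$ operator norm of convolution with a fixed scalar kernel is ``insensitive to the Banach space in which the functions take values'' is false in general; if it were true, the vector-valued Weissler inequality \eqref{equa:WeisslerResult} would follow trivially from the scalar case, whereas \cite[Lemma~1.3]{CaDeSe16} requires a genuine argument. Young's inequality is also not the right tool: it bounds the $L_p\to L_q$ norm by an $L_r$-norm of the kernel, which is not the quantity $\Vert T_{r_{k+1}}\Vert=\Vert T_{r_{k+1}}\colon H_p(\mathbb{T},X)\to H_q(\mathbb{T},X)\Vert$ your induction needs (and which the statement of the proposition, and its later application through Weissler's bound $\Vert T_{r_k}\Vert\le 1$, explicitly requires). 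Even exploiting positivity of the Poisson kernel would only return the scalar $L_p\to L_q$ convolution norm, which need not agree with $\Vert T_{r_{k+1}}\Vert$.

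The step is nevertheless true for the particular space $Y=L_p(\mathbb{T}^{n-k-1},X)$ you use, and the correct tool is Minkowski's integral inequality together with Fubini --- exactly what the paper invokes. Since $T_{r_{k+1}}$ acts only in the variable $w_{k+1}$ and $p\le q$, for a polynomial $g$ of $(w_{k+1},\omega)\in\mathbb{T}\times\mathbb{T}^{n-k-1}$ one has
\begin{align*}
\bigl\Vert T^{(k+1)}_{r_{k+1}}g\bigr\Vert_{L_q(\mathbb{T};Y)}
&= \Bigl\Vert\, \bigl\Vert \bigl(T_{r_{k+1}}g(\cdot,\omega)\bigr)(w_{k+1})\bigr\Vert_{L_p(d\omega;X)}\,\Bigr\Vert_{L_q(dw_{k+1})}\\
&\le \Bigl\Vert\, \bigl\Vert T_{r_{k+1}}g(\cdot,\omega)\bigr\Vert_{L_q(dw_{k+1};X)}\,\Bigr\Vert_{L_p(d\omega)}
\;\le\; \Vert T_{r_{k+1}}\Vert\, \Vert g\Vert_{L_p(\mathbb{T};Y)},
\end{align*}
where the first inequality is Minkowski's integral inequality (the smaller exponent $p$ moves outside), the second applies the $X$-valued hypothesis to each slice $g(\cdot,\omega)$, and the final identification uses Fubini. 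The crucial structural point is that $Y$ is an $L_p$-space over $X$ with the \emph{same} exponent $p$ as the domain of $T_{r_{k+1}}$; the constant would not survive for an arbitrary intermediate Banach space $Y$. With this substitution your induction closes and the remainder of your argument (extension by density, uniqueness) is fine.
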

\begin{proof}
Let $P = \sum_{\alpha \in \N_{0}^{n}}{c_{\alpha} w^{\alpha}}$ be an analytic polynomial in $n$ variables. Using Fubini's theorem and the integral Minkowski inequality it is easy to see that
\[ \left( \int_{\mathbb{T}^{N}}{\| P(z_{1}, \ldots, r_{k} z_{k}, \ldots z_{n})\|_{X}^{q}} d z \right)^{\frac{1}{q}} \leq \| T_{r_{k}}\|  \left( \int_{\mathbb{T}^{N}}{\| P(z_{1}, \ldots, z_{k}, \ldots z_{n})\|_{X}^{p}} d z \right)^{\frac{1}{p}}. \]
Iterating this inequality in each variable, we deduce that
\[ 
\|T_{r} P\|_{H_{q}(\mathbb{T}^{\infty},X)} \leq \Big( \prod_{k=1}^{n}{\| T_{r_{k}}\|} \Big) \, \| P\|_{H_{p}(\mathbb{T}^{\infty}, X)} \leq C_{r} \, \| P\|_{H_{p}(\mathbb{T}^{\infty}, X)}.
\]
Since analytic polynomials are dense in $H_{q}(\mathbb{T}^{\infty}, X)$ ($q$ is finite), the map $T_{r}$ can be extended to a bounded linear operator from $H_{q}(\mathbb{T}^{N}, X)$ into $H_{p}(\mathbb{T}^{N}, X)$ with $\| T_{r}\| \leq C_{r}$.
\end{proof}

A deep result of Weissler, whose vector-valued version was established in \cite[Lemma~1.3]{CaDeSe16}, provides for $1 \leq p \leq q < \infty$ with the bound 
\begin{equation}\label{equa:WeisslerResult}
\|T_{r} : H_{p} (\mathbb{T},X) \longrightarrow H_{q} (\mathbb{T},X)\|\leq 1 \quad \mbox{ whenever } \quad 0 \leq r \leq \sqrt{p/q}\,.
\end{equation}
We use this fact to prove the result we are aiming at.

\begin{Theo}\label{geyrenhoff}
Let $1 \leq p \leq q < \infty$. For every $\sum a_{n} n^{-s}$ in $\mathcal{H}_{q}(X)$ and every $\varepsilon > 0$, the series $\sum \frac{a_{n}}{n^{\varepsilon}} n^{-s}$ belongs to $\mathcal{H}_{p}(X)$.
\end{Theo}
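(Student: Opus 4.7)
The plan is to express the $\varepsilon$-shift as a concrete bounded operator on $\mathcal{H}_{q}(X)$ and then use the trivial inclusion of Hardy spaces on the probability space $\mathbb{T}^{\infty}$ to land in $\mathcal{H}_{p}(X)$. Setting $r=(r_{k})$ with $r_{k}:=\mathfrak{p}_{k}^{-\varepsilon}\in[0,1)$, one has $r^{\alpha}=n^{-\varepsilon}$ for every $\alpha\in\mathbb{N}_{0}^{(\mathbb{N})}$ with $n=\mathfrak{p}^{\alpha}$, so under the Bohr transform $\mathfrak{B}_{X}$ the multi-variable dilation $T_{r}$ from Proposition~\ref{ruthmayer} implements exactly the Dirichlet-series shift $\sum a_{n}n^{-s}\mapsto \sum \tfrac{a_{n}}{n^{\varepsilon}}n^{-s}$, first on analytic polynomials and, by density, on the whole of $\mathcal{H}_{q}(X)$.

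The first step is to check that $T_{r}$ is bounded on $H_{q}(\mathbb{T}^{\infty},X)$. I would apply Proposition~\ref{ruthmayer} with source and target both equal to $H_{q}(\mathbb{T}^{\infty},X)$, verifying its hypothesis via Weissler's estimate \eqref{equa:WeisslerResult} with the same exponent $q$ on both sides: this yields $\|T_{r_{k}}:H_{q}(\mathbb{T},X)\to H_{q}(\mathbb{T},X)\|\leq 1$ whenever $r_{k}\leq\sqrt{q/q}=1$, which is automatic since $r_{k}=\mathfrak{p}_{k}^{-\varepsilon}<1$. Thus $C_{r}\leq 1$ and $T_{r}$ extends to a contraction on $H_{q}(\mathbb{T}^{\infty},X)$, so $\sum \tfrac{a_{n}}{n^{\varepsilon}}n^{-s}$ already belongs to $\mathcal{H}_{q}(X)$.

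The second step is the cheap inclusion: because $\mathbb{T}^{\infty}$ carries a probability measure and $p\leq q$, the continuous embedding $L_{q}(\mathbb{T}^{\infty},X)\hookrightarrow L_{p}(\mathbb{T}^{\infty},X)$ restricts to Fourier support in $\mathbb{N}_{0}^{(\mathbb{N})}$ and transports through $\mathfrak{B}_{X}$ to $\mathcal{H}_{q}(X)\hookrightarrow \mathcal{H}_{p}(X)$. Combining the two steps gives $\sum \tfrac{a_{n}}{n^{\varepsilon}}n^{-s}\in\mathcal{H}_{p}(X)$, as required. The only point that requires a bit of care is the identification of the multi-variable dilation $T_{r}$ with the Dirichlet shift through the Bohr transform, i.e.\ tracking the bijection $\mathfrak{B}_{X}$ when one passes from polynomials to general elements; once that bookkeeping is in place, the theorem is an immediate consequence of the trivial $L_{q}\subseteq L_{p}$ inclusion on $\mathbb{T}^{\infty}$ together with one invocation of Weissler's single-variable contractivity via Proposition~\ref{ruthmayer}.
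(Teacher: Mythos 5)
Your argument is internally consistent, but it establishes only a triviality and misses the entire content of the theorem. The warning sign is visible inside your own proof: you invoke Weissler's estimate \eqref{equa:WeisslerResult} with the same exponent on both sides, so the condition becomes $r_{k}\leq\sqrt{q/q}=1$ and is vacuous, and your second step is just the inclusion $L_{q}(\mathbb{T}^{\infty},X)\subseteq L_{p}(\mathbb{T}^{\infty},X)$ for $p\leq q$. What you have shown is that an element of the \emph{smaller} space $\mathcal{H}_{q}(X)$ lies in the \emph{larger} space $\mathcal{H}_{p}(X)$ --- which is true without any $\varepsilon$-shift at all. The statement as printed has the roles of $p$ and $q$ transposed: the sentence introducing the theorem (``if we translate a Dirichlet series in some $\mathcal{H}_{p}(X)$ a little bit to the right, then it belongs to every $\mathcal{H}_{q}(X)$ for every $p<q$'') and the deduction of Theorem~\ref{hypercontractivityabs} (which needs the non-obvious inequality $\sigma_{\mathcal{H}_{q}(X)}(D)\leq\sigma_{\mathcal{H}_{p}(X)}(D)$) make clear that the claim to be proved is that the shift maps the larger space $\mathcal{H}_{p}(X)$ into the smaller space $\mathcal{H}_{q}(X)$; accordingly, the paper's proof constructs precisely the bounded operator $\mathcal{H}_{p}(X)\to\mathcal{H}_{q}(X)$, $\sum a_{n}n^{-s}\mapsto\sum a_{n}n^{-\varepsilon}n^{-s}$.

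Your route cannot be patched to give that: the ``cheap inclusion'' points the wrong way, and the one-variable boundedness you use is only the trivial Poisson-kernel contraction on $H_{q}(\mathbb{T},X)$. The real proof must apply Weissler with genuinely distinct exponents. For $r_{k}=\mathfrak{p}_{k}^{-\varepsilon}$ one has $r_{k}\leq\sqrt{p/q}<1$ for all $k\geq k_{0}$, so $\|T_{r_{k}}\colon H_{p}(\mathbb{T},X)\to H_{q}(\mathbb{T},X)\|\leq 1$ for those $k$ by \eqref{equa:WeisslerResult}, while the finitely many remaining factors are controlled by $\|T_{r_{k}}\|\leq(1-r_{k})^{-1}$ from \eqref{kakabsa}; hence $C_{r}=\sup_{n}\prod_{k=1}^{n}\|T_{r_{k}}\|<\infty$ and Proposition~\ref{ruthmayer}, used with the two exponents actually different, yields the bounded operator $H_{p}(\mathbb{T}^{\infty},X)\to H_{q}(\mathbb{T}^{\infty},X)$ that the Bohr transform converts into the desired shift. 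Your bookkeeping with $\mathfrak{B}_{X}$ and the density argument is fine and matches the paper's; what is missing is the hypercontractive step itself.
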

\begin{proof}
Let us in a first stage take some that $0 < r_{k} < 1$ as in the statement of Proposition~\ref{ruthmayer}, and define for each $n=\pr^{\alpha} \in \mathbb{N}$ the element $\lambda_{n} = r^{\alpha}$. Our aim now is to show that the
map $M_{\lambda} : \mathcal{H}_{p}(X) \longrightarrow \mathcal{H}_{q}(X)$ which associates $\sum a_{n} n^{-s} \longmapsto \sum \lambda_{n} a_{n} n^{-s}$ is a well defined continuous operator. A simple computation shows that
$M_{\lambda}$ and $\mathfrak{B}_{X} T_{r} \mathfrak{B}^{-1}_{X}$ (where $\mathfrak{B}_{X}$ is the Bohr transform from \eqref{araceli}) coincide on the Dirichlet polynomials. Then, a density argument shows that $M_{\lambda}$ is well defined, bounded and satisfies  the norm estimate
\[
\Vert M_{\lambda} \Vert = \Vert \mathfrak{B} T_{r} \mathfrak{B}_{X}^{-1}  \Vert = \Vert T_{r} \Vert \leq \sup_{n} \prod_{k=1}^{n} \Vert T_{r_{k}} \Vert \,.
\]
With this observation we can finish the proof of the theorem. Fixed $\varepsilon >0$, define $r_{k} = \pr_{k}^{-\varepsilon}$ for $k \in \mathbb{N}$. Then, there is some $k_{0}$ such that for all $k \ge k_0$ it holds $r_{k} \leq \sqrt{p/q}$, so that $\Vert T_{r_{k}} \Vert \leq 1$ by \eqref{equa:WeisslerResult}. In particular, $\sup_{n} \prod_{k=1}^{n} \Vert T_{r_{k}} \Vert  =  \prod_{k=1}^{k_{0}} \Vert T_{r_{k}} \Vert $ is obviously finite and we can apply the previous observation to $\lambda_{n}= \frac{1}{n^{\varepsilon}}$ to conclude that the operator $\mathcal{H}_{p}(X) \longrightarrow \mathcal{H}_{q}(X)$ given by $\sum a_{n} n^{-s} \longmapsto \sum \frac{a_{n}}{n^{\varepsilon}} n^{-s}$ is well defined and continuous.
\end{proof}

As the Banach space $\mathcal{H}_{p}(X)$ satisfies Bohr's fundamental theorem (see \cite[Corollary~3.3]{DePe16}), we immediately deduce from Theorem~\ref{geyrenhoff} the following result.

\begin{Theo} \label{hypercontractivityabs}
Given $1 \leq  p \leq q < \infty$, every Dirichlet series $D= \sum a_n n^{-s}$ in $\mathfrak{D}(X)$ satisfies that
$\sigma_{\mathcal{H}_p (X)}(D) = \sigma_{\mathcal{H}_q(X)}(D)$\,.
\end{Theo}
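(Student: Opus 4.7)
The plan is to prove the two inequalities $\sigma_{\mathcal{H}_p(X)}(D) \leq \sigma_{\mathcal{H}_q(X)}(D)$ and $\sigma_{\mathcal{H}_q(X)}(D) \leq \sigma_{\mathcal{H}_p(X)}(D)$ separately, with the second being the substantive one that uses Theorem~\ref{geyrenhoff}.

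For the easy direction, I would observe that since $\mathbb{T}^{\infty}$ carries a probability measure, H\"older's inequality gives the continuous embedding $H_q(\mathbb{T}^{\infty},X) \hookrightarrow H_p(\mathbb{T}^{\infty},X)$ for $p \leq q$, which transports through the Bohr transform to an inequality $\|P\|_{\mathcal{H}_p(X)} \leq \|P\|_{\mathcal{H}_q(X)}$ valid on every Dirichlet polynomial $P$. The comparison principle for admissible norms (noted at the start of the subsection \emph{Strips between abscissas} as a direct consequence of Proposition~\ref{Prop:BasicEstimationsAbscissae}) then yields $\sigma_{\mathcal{H}_p(X)}(D) \leq \sigma_{\mathcal{H}_q(X)}(D)$ for every $D \in \mathfrak{D}(X)$.

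For the reverse inequality, I would fix $D = \sum a_n n^{-s}$ and pick any $\sigma > \sigma_{\mathcal{H}_p(X)}(D)$, so that $\sum \frac{a_n}{n^\sigma} n^{-s}$ is convergent in $\mathcal{H}_p(X)$ and, by the left-hand inequality in \eqref{equa:admissibleNorm}, belongs to $\mathcal{H}_p(X)$. Applying Theorem~\ref{geyrenhoff} (with the shifted Dirichlet series in place of $D$) to an arbitrary $\varepsilon > 0$, the further-translated series $\sum \frac{a_n}{n^{\sigma+\varepsilon}} n^{-s}$ lies in $\mathcal{H}_q(X)$. Since $\mathcal{H}_q(X)$ satisfies Bohr's fundamental theorem, its abscissa of $\mathcal{H}_q(X)$-convergence can be computed as the infimum of those translates $\tau$ for which $\sum \frac{a_n}{n^\tau} n^{-s}$ belongs to $\mathcal{H}_q(X)$, so we conclude $\sigma_{\mathcal{H}_q(X)}(D) \leq \sigma + \varepsilon$. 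Letting first $\varepsilon \downarrow 0$ and then $\sigma \downarrow \sigma_{\mathcal{H}_p(X)}(D)$ finishes the argument.

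I do not expect any genuine obstacle here: the whole content has been packaged into Theorem~\ref{geyrenhoff} and the fact that every $\mathcal{H}_p(X)$ satisfies Bohr's fundamental theorem. The only points requiring minor care are the two uses of Bohr's fundamental theorem (once on each side, to translate between \emph{convergence in} $\mathcal{H}_p(X)$ and \emph{membership in} $\mathcal{H}_p(X)$), and the fact that the quantifier structure $\forall \sigma \; \forall \varepsilon$ correctly passes to the infimum defining the abscissa.
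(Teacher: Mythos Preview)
Your proposal is correct and matches the paper's approach exactly: the paper derives the theorem in one line from Theorem~\ref{geyrenhoff} together with the fact that each $\mathcal{H}_p(X)$ satisfies Bohr's fundamental theorem, and you have simply spelled out that deduction. One minor remark: in your closing paragraph you speak of ``two uses of Bohr's fundamental theorem,'' but the passage from \emph{convergence in} $\mathcal{H}_p(X)$ to \emph{membership in} $\mathcal{H}_p(X)$ needs only completeness and the left-hand side of \eqref{equa:admissibleNorm} (as you in fact invoke in the body of the argument); Bohr's fundamental theorem is used only once, on the $\mathcal{H}_q(X)$ side.
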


An alternative proof of Theorem \ref{hypercontractivityabs} can be carried out by means of the following result stated in \cite[Remark I]{DePe17}: the best constant $\mho(q,p,N)$ satisfying
\[ \Big\| \sum_{n= 1}^{N}{\frac{a_{n}}{n^{s}}} \Big\|_{\mathcal{H}_{q}(X)} \leq \mho(q,p, N) \Big\| \sum_{n = 1}^{N}{\frac{a_{n}}{n^{s}}} \cdot \Big\|_{\mathcal{H}_{p}(X)} \]
for every $a_{1}, \ldots, a_{N} \in X$ can be estimated asymptotically on $N$ as
\begin{equation}\label{equa:MoinConstant} 
\mho(q,p, N)= \exp{\left( \frac{\log{N}}{\log{\log{N}}} \left( \log{\sqrt{\frac{q}{p}}} + o(1) \right) \right)}.
\end{equation}
This yields in particular that $\mho(q,p, N) = O(N^{\varepsilon})$ for every $\varepsilon > 0$, so it follows from Proposition \ref{Prop:stripMauQueff} that $\sigma_{\HH_{q}(X)}(D) \leq \sigma_{\HH_{p}(X)}(D)$ for every Dirichlet series $D$ in $\mathfrak{D}(X)$. The reverse inequality is obvious. The proof of \eqref{equa:MoinConstant} relies again on the aforementioned (vector-valued) result of Weissler \eqref{equa:WeisslerResult} combined with tools from number theory.\\

Bohr's absolute problem for $\mathcal{H}_{p}(X)$-spaces has been studied, firstly by Bayart in \cite{Ba02} for series with coefficients in $\mathbb{C}$, and later in \cite{CaDeSe14} with coefficients in some arbitrary Banach space (see also \cite{DeSe_B, QuQu13}), showing that
\begin{equation} \label{novos}
S(\ell_{1}(X), \mathcal{H}_{p}(X)) = \sup \{  \sigma_{a}(D) - \sigma_{\mathcal{H}_{p}(X)}(D)  \,\colon \, D \in \mathfrak{D} (X) \} = 1 - \frac{1}{\cot(X)}  \,.
\end{equation}
The lower bound of this estimation is consequence of \eqref{tempestade} and  the fact that uniform convergence is stronger that $\mathcal{H}_{p}(X)$-convergence, which yield that
\[ 1- \frac{1}{\cot{(X)}} = S(\ell_{1}(X), \mathfrak{D}_{\infty}(X)) \leq S(\ell_{1}(X), \mathcal{H}_{p}(X)). \]
We give now an alternative new proof of the upper estimate in \eqref{novos} (shorter than the original) which is also based on \eqref{tempestade}. We just have to prove it for the case $p=1$. It was established in \cite[Lemma~3.3]{DePe16} that the map
\[ \mathcal{H}_{1}(X) \longrightarrow \mathfrak{D}_{\infty}(\mathcal{H}_{1}(X)) \,, \quad D=\sum_{n}{a_{n}n^{-s}} \longmapsto \widetilde{D}=\sum_{n}{(a_{n} n^{-s}) n^{-z}} \]
is well defined and establishes a canonical isometric isomorphism from $\mathcal{H}_{1}(X)$ into a subspace of $\mathfrak{D}_{\infty}(\mathcal{H}_{1}(X))$, so that
\[ \sigma_{\mathfrak{D}_{\infty}(\mathcal{H}_{1}(X))}(\widetilde{D}) = \sigma_{\mathcal{H}_{1}(X)}(D). \]
Therefore, using \eqref{tempestade}
\[ S(\ell_{1}(X), \mathcal{H}_{1}(X)) \leq S(\ell_{1}(X), \mathfrak{D}_{\infty}(\mathcal{H}_{1}(X))) = 1 - \frac{1}{\cot{(\mathcal{H}_{1}(X))}}, \]
and finally, since $\cot{(L_{1}(\mu, X))} = \cot{(X)}$ (see \cite[Theorem 11.12]{DiJaTo95})
\[\cot{(\mathcal{H}_{1}(X))} = \cot{(H_{1}(\IDT,X))} \leq \cot{(L_{1}(\mathbb{T}^{\infty}, X))} = \cot{(X)}.\]
\section{Weak abscissas}

Given a Dirichlet series $ D= \sum a_{n} n^{-s}$ taking values in a Banach space $X$ and a functional $x^{*} \in X^{*}$ we have a scalar valued Dirichlet series
\[
D_{x^{\ast}} = \sum x^{*}( a_{n} ) n^{-s} \,.
\]
Based on this, if we have a well defined abscissa of convergence $\sigma_{\punkt}(\cdot)$ for scalar Dirichlet series, then one can introduce the following \emph{weak-abscissas of convergence} for a Dirichlet series $D$ with coefficients in $X$
\begin{equation}\label{equa:weakAbscissaOriginal}
\sigma_{\punkt}^{\weak}(D) = \sup_{\Vert x^\ast \Vert_{X^{*}} < 1} \sigma_{\punkt}(D_{x^{\ast}})  \,.
\end{equation}
These abscissas were introduced by J. Bonet \cite{Bo15} in the more general setting of locally convex spaces for the cases of pointwise, uniform and absolute convergence, namely $\sigma_{c}^{\weak}$, $\sigma_{u}^{\weak}$ and $\sigma_{a}^{\weak}$, and studied its relation with the vector-valued versions $\sigma_{c}$, $\sigma_{u}$ and $\sigma_{a}$ respectively. \\
We are going to show that in general every weak-abscissa of convergence can be actually been as an abscissa of convergence for a suitable admissible space, which will let us deduce some immediate consequences. To do that, given $\mathfrak{X}$ an admissible Banach space of scalar Dirichlet series and a Banach space $X$ we define the following space of
Dirichlet series with coefficients in $X$:
\begin{equation} \label{rascal}
\mathfrak{X}^{\weak}(X):= \Big\{ \sum a_{n}n^{-s} \in \mathfrak{D}(X) \colon \sup_{x^{\ast} \in X^{\ast}} \Big\| \sum x^{*} (a_{n})  n^{-s} \Big\|_{\mathfrak{X}} < \infty \Big\} 
\end{equation}
endowed with the norm $\big\| \sum a_{n}n^{-s} \big\|_{\mathfrak{X}^{\weak}(X)} := \sup_{x^{\ast} \in X^{\ast}} \big\| \sum x^{*} (a_{n})  n^{-s} \big\|_{\mathfrak{X}}$

\begin{Prop}
Let $\mathfrak{X}$ be an admissible Banach space of scalar Dirichlet series, then the space $\mathfrak{X}^{\weak}(X)$ defined in \eqref{rascal} is an admissible Banach space and
\begin{equation} \label{basinstreet}
\sigma^{\weak}_{\mathfrak{X}}(D) = \sigma_{\mathfrak{X}^{\weak}(X)}(D) 
\end{equation}
for every Dirichlet series with coefficients in $X$.
\end{Prop}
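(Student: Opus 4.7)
The proof breaks naturally into two parts: verifying that $\mathfrak{X}^{\weak}(X)$ is itself an admissible Banach space, and then the identity \eqref{basinstreet}. For the admissibility inequalities in \eqref{equa:admissibleNorm}, I would use the admissibility constants $c_1, c_2 > 0$ of $\mathfrak{X}$. For the upper bound, observe that for every $x^* \in B_{X^*}$ and every $\sum a_n n^{-s} \in \mathfrak{D}(X)$,
\[
\Big\|\sum x^*(a_n) n^{-s}\Big\|_{\mathfrak{X}} \leq c_2 \sum_n |x^*(a_n)| \leq c_2 \sum_n \|a_n\|_X,
\]
and take sup over $x^*$. For the lower bound, fix $n$ and pick by Hahn-Banach some $x^*_n \in B_{X^*}$ with $x^*_n(a_n) = \|a_n\|_X$; then by admissibility of $\mathfrak{X}$,
\[
\Big\| \sum a_k k^{-s} \Big\|_{\mathfrak{X}^{\weak}(X)} \geq \Big\| \sum x^*_n(a_k) k^{-s}\Big\|_{\mathfrak{X}} \geq c_1 \sup_k |x^*_n(a_k)| \geq c_1 \|a_n\|_X.
\]
Since Dirichlet polynomials clearly lie in $\mathfrak{X}^{\weak}(X)$ (their scalarizations are polynomials in $\mathfrak{X}$, uniformly bounded in $x^*$), we are done with admissibility modulo completeness.

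For completeness, take a Cauchy sequence $(D^{(k)})_k$ with $D^{(k)} = \sum_n a^{(k)}_n n^{-s}$. The lower admissibility bound above implies that each coefficient sequence $(a^{(k)}_n)_k$ is Cauchy in $X$, and hence converges to some $a_n \in X$. For fixed $x^* \in B_{X^*}$, the scalarizations $D^{(k)}_{x^*}$ form a Cauchy sequence in $\mathfrak{X}$ (bounded in norm by $\|D^{(k)} - D^{(j)}\|_{\mathfrak{X}^{\weak}(X)}$), hence converge to some $E_{x^*} \in \mathfrak{X}$; by the lower admissibility bound in $\mathfrak{X}$, the coefficients of $E_{x^*}$ must be $\lim_k x^*(a^{(k)}_n) = x^*(a_n)$, so $E_{x^*} = \sum x^*(a_n) n^{-s}$. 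A standard $\varepsilon/2$ argument then gives $\|D^{(k)} - D\|_{\mathfrak{X}^{\weak}(X)} \to 0$ where $D := \sum a_n n^{-s}$, which also shows $D \in \mathfrak{X}^{\weak}(X)$.

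Finally, the abscissa identity \eqref{basinstreet}. The inequality $\sigma_{\mathfrak{X}}^{\weak}(D) \leq \sigma_{\mathfrak{X}^{\weak}(X)}(D)$ is easy: if $\sigma > \sigma_{\mathfrak{X}^{\weak}(X)}(D)$, the partial sums of $\sum \frac{a_n}{n^\sigma} n^{-s}$ are Cauchy in $\mathfrak{X}^{\weak}(X)$, and then the scalarization by any $x^* \in B_{X^*}$ is Cauchy (hence convergent) in $\mathfrak{X}$, so $\sigma \geq \sigma_{\mathfrak{X}}(D_{x^*})$ for every such $x^*$. The reverse inequality is the main point, and I would derive it via the uniform boundedness principle together with \eqref{equa:abscissaPartialBounds}. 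Fix $\sigma > \sigma_{\mathfrak{X}}^{\weak}(D)$ and consider the linear operators
\[
\phi_N : X^* \longrightarrow \mathfrak{X}, \qquad \phi_N(x^*) = \sum_{n=1}^{N} \frac{x^*(a_n)}{n^\sigma} n^{-s}.
\]
Each $\phi_N$ is bounded (its range is finite-dimensional, and $|x^*(a_n)| \leq \|x^*\|\|a_n\|_X$). For each fixed $x^* \in X^*$ we have $\sigma > \sigma_{\mathfrak{X}}(D_{x^*})$, so $\sum \frac{x^*(a_n)}{n^\sigma} n^{-s}$ converges in $\mathfrak{X}$, and in particular $\sup_N \|\phi_N(x^*)\|_{\mathfrak{X}} < \infty$. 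By the uniform boundedness principle, $\sup_N \|\phi_N\| < \infty$, i.e.
\[
\sup_N \Big\| \sum_{n=1}^N \frac{a_n}{n^\sigma} n^{-s} \Big\|_{\mathfrak{X}^{\weak}(X)} = \sup_N \sup_{x^* \in B_{X^*}} \|\phi_N(x^*)\|_{\mathfrak{X}} < \infty.
\]
Proposition \ref{Prop:BasicEstimationsAbscissae} (formula \eqref{equa:abscissaPartialBounds}) then yields $\sigma_{\mathfrak{X}^{\weak}(X)}(D) \leq \sigma$, proving $\sigma_{\mathfrak{X}^{\weak}(X)}(D) \leq \sigma_{\mathfrak{X}}^{\weak}(D)$. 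The only delicate point is this UBP step; everything else is routine bookkeeping.
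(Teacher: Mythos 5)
Your proposal is correct and follows essentially the same route as the paper: the completeness argument via Cauchy coefficient sequences and pointwise-convergent scalarizations, the easy inequality from the definition of the norm together with \eqref{equa:abscissaPartialBounds}, and, for the reverse inequality, the uniform boundedness principle applied to the operators $x^{\ast}\mapsto\sum_{n=1}^{N}\frac{x^{\ast}(a_{n})}{n^{\sigma}}n^{-s}$ followed by \eqref{equa:abscissaPartialBounds}. The only difference is that you spell out the admissibility inequalities (via Hahn--Banach) which the paper dismisses as clear.
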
 

\begin{proof}
It is clear that $\mathfrak{X}^{\weak}(X)$ is a well defined normed space containing all Dirichlet polynomials on $X$ and whose norm satisfies \eqref{equa:admissibleNorm}. The argument for completeness is standard.  
If $D_{k} = \sum_{n}{a_{n}^{k} n^{-s}}$ is a Cauchy sequence in $\mathfrak{X}^{\weak}(X)$, then by \eqref{equa:admissibleNorm} we have that for each $n$ the sequence $(a_{n}^{k})_{k}$ is Cauchy in $X$ and 
hence convergent to some $a_{n}$. Consider now the Dirichlet series $D=\sum a_{n} n^{-s}$. For each $x^{*} \in X^{*}$ the sequence  $\big(  \sum_{n} x^{*}(a_{n}^{k}) n^{-s} \big)_{k}$ is Cauchy in  $\mathfrak{X}$.
It is thus convergent and, since $\lim_{k}{x^{\ast}(a_{n}^{k})} = x^{\ast}(a_{n})$ for each $n$, its limit must be $\sum x^{*}(a_{n}) n^{-s}$. In other words, we have that the Cauchy sequence of operators 
\[ X^{\ast} \longrightarrow \mathfrak{X}\,, \quad x^{\ast} \longmapsto \sum_{n}{x^{\ast}(a_{n}^{k})n^{-s}}\] 
converges pointwise to the map 
\[ X^{\ast} \longrightarrow \mathfrak{X}\,, \quad x^{\ast} \longmapsto \sum_{n}{x^{\ast}(a_{n})n^{-s}}\]
Hence, this last map is a bounded linear operator (i.e. $D \in \mathfrak{X}^{\weak}(X)$) and the convergence is actually in the operator norm (i.e. $D_{k}$ converges to $D$ in $\mathfrak{X}^{\weak}(X)$).

It remains to check that \eqref{basinstreet} holds. From \eqref{equa:abscissaPartialBounds} and the definition of the norm in $\mathfrak{X}^{\weak}(X)$ we have $\sigma^{\weak}_{\mathfrak{X}}(D) \leq \sigma_{\mathfrak{X}^{\weak}(X)}(D)$ for every Dirichlet series with coefficients in $X$. For the reverse inequality we take $D=\sum a_{n} n^{-s}$ and consider $\alpha > \sigma^{\weak}_{\mathfrak{X}}(D)$. Then we define the sequence of bounded linear operators
\[ 
P_{N} : X^{\ast} \longrightarrow \mathfrak{X}, \hspace{3mm} x^\ast \longmapsto \sum_{n=1}^{N} \frac{x^{\ast}(a_{n})}{n^{\alpha}} n^{-s} \,.
\]
For each fixed $x^{\ast} \in X^{\ast}$, the sequence $P_{N}(x^{\ast})$ converges by the choice of $\alpha$, in particular   $\big( P_{N}(x^{\ast}) \big)_{N}$ is bounded. Applying the uniform boundedness principle we deduce that 
\[ \sup_{N}{\Big\| \sum_{n}{\frac{a_{n}}{n^{\alpha}} n^{-s}} \Big\|_{\mathfrak{X}^{\weak}(X)}} = \sup_{N}{\| P_{N}\|} < + \infty. \]
By \eqref{equa:abscissaPartialBounds} we conclude that $\sigma_{\mathfrak{X}^{\weak}(X)}(D) \leq \alpha$.
\end{proof}

We obtain as a consequence the following description of the weak abscissas of uniform and pointwise convergence. This has been independently obtained by J.~Bonet \cite[Proposition~2.1 and Corollary~2.8]{Bo15}.

\begin{Coro} \label{kina}
For every Dirichlet series $D$ in $\mathcal{X}(X)$ we have 
\[
\sigma_{c}(D) = \sigma_{c}^{\weak}(D) \text{ and } \sigma_{u}(D) = \sigma_{u}^{\weak}(D) \,.
\]
\end{Coro}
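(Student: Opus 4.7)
The strategy is to reduce everything to the identification proved in the previous proposition, namely $\sigma^{\weak}_{\mathfrak{X}}(D) = \sigma_{\mathfrak{X}^{\weak}(X)}(D)$, and then to show that on Dirichlet polynomials the norms of the original admissible space and of its weak version actually coincide. Once this is done, the Bohr--Cahen type characterization \eqref{equa:abscissaPartialBounds} in Proposition~\ref{Prop:BasicEstimationsAbscissae} — which expresses the abscissa purely in terms of the norms of the partial sums — immediately forces the two abscissas to agree.

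First I would record, using Remark~\ref{stlouis}, that $\sigma_{c}(D)=\sigma_{c(X)}(D)$ and $\sigma_{u}(D)=\sigma_{\mathcal{D}_{\infty}(X)}(D)$, while by the previous proposition applied to $\mathfrak{X}=c(\mathbb{C})$ and $\mathfrak{X}=\mathcal{D}_{\infty}(\mathbb{C})$ we get $\sigma_{c}^{\weak}(D)=\sigma_{c(\mathbb{C})^{\weak}(X)}(D)$ and $\sigma_{u}^{\weak}(D)=\sigma_{\mathcal{D}_{\infty}(\mathbb{C})^{\weak}(X)}(D)$. So the corollary reduces to showing that the admissible norms $\|\cdot\|_{c(X)}$ and $\|\cdot\|_{c(\mathbb{C})^{\weak}(X)}$ agree on Dirichlet polynomials with coefficients in $X$, and likewise for $\mathcal{D}_{\infty}$.

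For a Dirichlet polynomial $P=\sum_{n=1}^{N}a_{n}n^{-s}$, the Hahn--Banach theorem gives $\|v\|_{X}=\sup_{\|x^{\ast}\|\le 1}|x^{\ast}(v)|$ for each $v\in X$, so in the case of $c$,
\[
\|P\|_{c(X)}=\sup_{M}\Big\|\sum_{n=1}^{M}a_{n}\Big\|_{X}=\sup_{M}\sup_{\|x^{\ast}\|\le 1}\Big|\sum_{n=1}^{M}x^{\ast}(a_{n})\Big|=\sup_{\|x^{\ast}\|\le 1}\|P_{x^{\ast}}\|_{c(\mathbb{C})}=\|P\|_{c(\mathbb{C})^{\weak}(X)},
\]
and in the case of $\mathcal{D}_{\infty}$, swapping the two suprema,
\[
\|P\|_{\mathcal{D}_{\infty}(X)}=\sup_{s\in\mathbb{C}_{0}}\|P(s)\|_{X}=\sup_{s\in\mathbb{C}_{0}}\sup_{\|x^{\ast}\|\le 1}|x^{\ast}(P(s))|=\sup_{\|x^{\ast}\|\le 1}\|P_{x^{\ast}}\|_{\mathcal{D}_{\infty}(\mathbb{C})}=\|P\|_{\mathcal{D}_{\infty}(\mathbb{C})^{\weak}(X)}.
\]

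Finally, since \eqref{equa:abscissaPartialBounds} expresses $\sigma_{\mathfrak{X}(X)}(D)$ as the infimum of those $\sigma\in\mathbb{R}$ for which $\sup_{N}\big\|\sum_{n=1}^{N}\tfrac{a_{n}}{n^{\sigma}}n^{-s}\big\|_{\mathfrak{X}(X)}<\infty$, and the polynomial norm is all that enters, the two displayed equalities above yield $\sigma_{c(X)}(D)=\sigma_{c(\mathbb{C})^{\weak}(X)}(D)$ and $\sigma_{\mathcal{D}_{\infty}(X)}(D)=\sigma_{\mathcal{D}_{\infty}(\mathbb{C})^{\weak}(X)}(D)$, which is exactly the statement. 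There is no real obstacle here: the only subtle point is recognizing that, because Proposition~\ref{Prop:BasicEstimationsAbscissae} makes the abscissa a function only of polynomial norms, the coincidence of weak and strong norms on polynomials — elementary via Hahn--Banach — is enough, and one does not need to verify that $\mathcal{D}_{\infty}^{\weak}(X)$ and $\mathcal{D}_{\infty}(X)$ agree as spaces (which would require a weak-holomorphy argument).
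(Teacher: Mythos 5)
Your proof is correct and follows essentially the same route as the paper, whose entire argument is that $c^{\weak}(X)=c(X)$ and $\mathcal{D}_{\infty}^{\weak}(X)=\mathcal{D}_{\infty}(X)$ ``since the supremums commute'' --- exactly your Hahn--Banach computation. Your refinement of comparing norms only on Dirichlet polynomials and then invoking \eqref{equa:abscissaPartialBounds}, rather than asserting equality of the spaces themselves (which for $\mathcal{D}_{\infty}$ would indeed require a weak-holomorphy argument), is a slightly more careful execution of the same idea.
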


\begin{proof}
Since abscissas of pointwise and uniform convergence correspond to the convergence in $c(X)$ and $\mathcal{D}_{\infty}(X)$ (see Remark~\ref{stlouis}) it is enough to note that $c^{\weak}(X) = c(X)$ and $\mathcal{D}^{\weak}_{\infty}(X)= \mathcal{D}_{\infty}(X)$ since the supremums commute.
\end{proof}

\begin{Rema}
Using the definition of the weak abscissa of convergence, it follows from the equality of all $\mathcal{H}_{p}(X)$ abscissas for $1 \leq p<\infty$ (see Theorem~\ref{geyrenhoff}) that
\[
\sigma_{\HH_{p}(X)}^{\weak}(D) = \sigma_{\HH_{q}(X)}^{\weak}(D)
\]
for all $1 \leq p < q < \infty$ and every $D$ in $\mathfrak{D}(X)$. That is, all weak abscissas are equal. But, contrary to the abscissas of convergence and of uniform convergence, the weak versions do not coincide with the "classical" one. To see this it suffices to consider the Dirichlet series $\sum_{n}{e_{n}n^{-s}}$ where $\{ e_{n}\colon n \in \N\}$ is the canonical basis of $\ell_{1}$ to have that
\[
\sup_{D}{\sigma_{\HH_{2}(\ell_{1})}^{\weak}(D) - \sigma_{\HH_2(\ell_{1})}(D)} \geq 1/2 \,.
\]
\end{Rema}

\subsection{Abscissa of Unconditional Convergence} \label{sect:unconditionalAbcissa}

Let us recall that a series $\sum x_{n}$ in a Banach space is said to be unconditionally convergent if $\sum \varepsilon_{n} x_{n}$ is convergent for every choice of signs $\varepsilon_{n} = \pm 1$. A classical result of
Riemann shows that unconditional and absolute convergence are the same for finite dimensional spaces, but the Dvoretzki-Rogers Theorem shows that these two concepts are intrinsically different for infinite dimensional 
Banach spaces. It make sense, then, when we deal with Banach-valued Dirichlet series, to consider a new abscissa of unconditional convergence, and to try to distinguish it from the abscissa of absolute convergence.
So, given a Dirichlet series $D= \sum_{n}{a_{n}n^{-s}}$ we define its abscissa of unconditional convergence as
\[ 
\sigma_{\unc}(D) = \inf{\{ \sigma \,\colon\, D  \text{ converges unconditionally on } \mathbb{C}_{\sigma} \}} \,.
\]
In case $X$ is a finite-dimensional Banach space, we have that $\sigma_{a}(D) = \sigma_{\unc}(D)$. So, in what follows we will always assume $X$ to be infinite dimensional and in this case we can only assure that
\begin{equation}  \label{EQUA:threeAbcissas}
\sigma_{u}(D) \leq \sigma_{\unc}(D) \leq \sigma_{a}(D) \,.
\end{equation}
This new abscissa is actually the abscissa of weak absolute convergence that we considered before (and that was also introduced in \cite{Bo15}).
\begin{Lemm}
\label{Lemm:uncondAbcissa}
$\sigma_{\unc}(D) 
= \sigma_{a}^{\weak}(D)$. 
\end{Lemm}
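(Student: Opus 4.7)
The plan is to prove both inequalities separately, combining Riemann's classical theorem on one side with the $c_{0}$-multiplier theorem for weakly unconditionally Cauchy series on the other.

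For the inequality $\sigma_{a}^{\weak}(D) \leq \sigma_{\unc}(D)$, I would pick any $s \in \mathbb{C}$ at which $\sum a_{n} n^{-s}$ converges unconditionally in $X$. Applying any $x^{*} \in X^{*}$ yields an unconditionally convergent scalar series $\sum x^{*}(a_{n}) n^{-s}$, and Riemann's theorem upgrades scalar unconditional convergence to absolute convergence. Hence $\sigma_{a}(D_{x^{*}}) \leq \Real s$ for every $x^{*}$; taking the supremum over $x^{*}$ and the infimum over such $s$ gives the desired bound.

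For the reverse inequality $\sigma_{\unc}(D) \leq \sigma_{a}^{\weak}(D)$, I would fix $\sigma > \sigma_{a}^{\weak}(D)$ and any $s$ with $\Real s > \sigma$, and then choose an intermediate exponent $s_{0}$ with $\sigma < \Real s_{0} < \Real s$. By definition of $\sigma_{a}^{\weak}$, the scalar series $\sum x^{*}(a_{n}) n^{-s_{0}}$ converges absolutely for every $x^{*}$, which is precisely the statement that the sequence $u_{n} := a_{n} n^{-s_{0}}$ is weakly unconditionally Cauchy in $X$. An application of the uniform boundedness principle to the maps $x^{*} \mapsto (x^{*}(u_{n}))_{n=1}^{N}$ into $\ell_{1}^{N}$ then produces a constant $C < \infty$ with $\bigl\Vert \sum_{n=1}^{N} \alpha_{n} u_{n} \bigr\Vert \leq C$ for all $N$ and all scalars $|\alpha_{n}| \leq 1$. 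Writing $a_{n} n^{-s} = \lambda_{n} u_{n}$ with $\lambda_{n} := n^{-(s - s_{0})} \in c_{0}$ and estimating, for any signs $\varepsilon_{n} = \pm 1$ and any tail $M \leq N$,
\[
\Bigl\Vert \sum_{n=M}^{N} \varepsilon_{n} \lambda_{n} u_{n} \Bigr\Vert \leq 2\, C \, \sup_{n \geq M} |\lambda_{n}|,
\]
one obtains that $\sum a_{n} n^{-s}$ satisfies the Cauchy criterion for unconditional convergence, because $\sup_{n \geq M} |\lambda_{n}| \to 0$ as $M \to \infty$. Hence $\sigma_{\unc}(D) \leq \Real s$, and letting $\Real s$ and $\sigma$ both approach $\sigma_{a}^{\weak}(D)$ completes the proof.

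The main obstacle to be navigated is the well-known fact that, in an infinite-dimensional $X$, weak unconditional Cauchyness of $(u_{n})$ is strictly weaker than unconditional summability of $\sum u_{n}$ (they coincide exactly when $X$ contains no copy of $c_{0}$, by Bessaga--Pe\l{}czy\'nski). The arbitrarily small horizontal shift $s - s_{0} > 0$ is exactly what is needed to inject a factor in $c_{0}$ and close this gap at the level of abscissas, turning the merely weak summability at $s_{0}$ into genuine unconditional summability at every $s$ with $\Real s > \Real s_{0}$.
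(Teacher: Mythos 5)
Your proof is correct and takes essentially the same route as the paper: the scalar Riemann theorem (unconditional $=$ absolute for scalars) gives $\sigma_{a}^{\weak}(D) \leq \sigma_{\unc}(D)$, and the bounded-multiplier characterization of weakly unconditionally Cauchy series (the content of the paper's citation of Diestel--Jarchow--Tonge, Theorem 1.9) gives the converse. The one point where you are more careful is the explicit shift from $s_{0}$ to $s$ via the $c_{0}$-multiplier $(n^{-(s-s_{0})})_{n}$; this step is genuinely needed, since wuC at a point does not yield unconditional convergence at that same point when $X$ contains $c_{0}$, and the paper's one-line argument leaves it implicit.
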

\begin{proof}
First of all, for every $x^{\ast} \in X^{\ast}$ we have that $\sigma_{a}(D_{x^{\ast}}) = \sigma_{\unc}(D_{x^{\ast}}) \leq \sigma_{\unc}(D)$. On the other hand, if $\sum_{n} \big\vert x^{\ast}(a_{n}) \frac{1}{n^{s}} \big\vert <  \infty$ for every $x^{\ast} \in X^{\ast}$, then \cite[Theorem~1.9]{DiJaTo95} gives that $\sum_{n}  a_{n} \frac{1}{n^{s}}$ converges unconditionally.
\end{proof}

Our next step is to see if, as in the case of finite dimensional spaces, these two abscissas of unconditional and absolute convergence are the same or not. Observe that, in view of Lemma~\ref{Lemm:uncondAbcissa} this is basically the
same question as in Proposition~\ref{kina}. As a matter of fact, as next result shows, these two abscissas can be far apart to each other. It has been independently obtained by J.~Bonet in \cite[Theorem~3.6]{Bo15}. The argument we give for the upper bound is essentially the same as there but our approach to the lower bound is different.

\begin{Theo}\label{Prop:stripUnconditional}
For every Banach space $X$ we have
\begin{equation} \label{EQUA:absoluteUncondAbcissa}
\sup \{  \sigma_{a}(D) - \sigma_{\unc}(D)  \,\colon \, D \in \mathfrak{D}(X)  \} = 1 - \frac{1}{\cot(X)}  
\end{equation}
and the supremum is attained.
\end{Theo}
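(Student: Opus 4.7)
The plan is to split the equality into its two inequalities, together with the attainment claim. By Lemma~\ref{Lemm:uncondAbcissa} together with \eqref{basinstreet} the problem is equivalent to computing the strip width $S(\ell_{1}(X),\ell_{1}^{\weak}(X))$ and exhibiting a Dirichlet series realizing it.

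For the upper bound, I would use the chain of inequalities \eqref{EQUA:threeAbcissas}, which gives $\sigma_{u}(D)\le \sigma_{\unc}(D)$. Combined with the vector-valued Bohr--Bohnenblust--Hille theorem \eqref{tempestade}, this yields
\[
\sigma_{a}(D)-\sigma_{\unc}(D)\;\le\;\sigma_{a}(D)-\sigma_{u}(D)\;\le\;S(\ell_{1}(X),\mathfrak{D}_{\infty}(X))\;=\;1-\frac{1}{\cot(X)},
\]
and passing to the supremum over $D \in \mathfrak{D}(X)$ settles $\sup\le 1-1/\cot(X)$. This is the same strategy used by Bonet; in the framework of the present note it can also be read off from Proposition~\ref{Prop:stripMauQueff} applied with $\mathfrak{X}_{2}=\ell_{1}^{\weak}(X)$ (which satisfies Bohr's fundamental theorem by the dominated tail estimate $\sum_{n>M} |x^{\ast}(a_{n})|/n^{\sigma}\le M^{-\sigma} \|D\|_{\ell_{1}^{\weak}(X)}$), combined with the classical cotype-plus-H\"{o}lder inequality $\sum_{n\le N}\|a_{n}\|\le C_{q} N^{1-1/q} \sup_{\|x^{\ast}\|\le 1}\sum_{n\le N}|x^{\ast}(a_{n})|$.

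For the lower bound together with the attainment I would construct an explicit $D^{\ast}$. Assume first $q:=\cot(X)<\infty$. By the Maurey--Pisier theorem $\ell_{q}$ is finitely representable in $X$: for every $n$ one can pick norm-one vectors $x_{1}^{(n)},\ldots,x_{n}^{(n)}\in X$ whose span is $(1+1/n)$-isomorphic to $\ell_{q}^{n}$. Using pairwise disjoint blocks of primes $\{p_{j}^{(n)}\}_{j\le n}$, form the polynomials
\[
P_{n}=\sum_{j=1}^{n} x_{j}^{(n)}\bigl(p_{j}^{(n)}\bigr)^{-s},
\]
for which the $\ell_{q}/\ell_{q'}$-duality yields $\|P_{n}\|_{\ell_{1}(X)}\asymp n$ and $\|P_{n}\|_{\ell_{1}^{\weak}(X)}\asymp n^{1/q}$, so the ratio grows like $n^{1-1/q}$. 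Amalgamating them with scalar weights, $D^{\ast}=\sum_{n}\lambda_{n} P_{n}$, and tuning $(\lambda_{n})$ in terms of the block sizes $N_{n}=\max_{j}p_{j}^{(n)}$, one can arrange that the shifted series $\sum_{n}\lambda_{n} a_{j}^{(n)}/N_{n}^{\sigma}$ be weakly absolutely summable exactly for $\sigma>0$ while the corresponding series of norms diverges exactly for $\sigma<1-1/q$. This produces a single Dirichlet series with $\sigma_{a}(D^{\ast})-\sigma_{\unc}(D^{\ast})=1-1/q$. For $\cot(X)=\infty$ I would carry out the same amalgamation along a sequence $q_{n}\to\infty$ using finitely representable copies of $\ell_{q_{n}}$, so that the combined gap reaches $1$.

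The main obstacle is the attainment statement, not the numerical equality. The copies of $\ell_{q}$ supplied by Maurey--Pisier are only asymptotic, so the constants hidden in $\|P_{n}\|_{\ell_{1}(X)}\asymp n$ and $\|P_{n}\|_{\ell_{1}^{\weak}(X)}\asymp n^{1/q}$ must be tracked uniformly in $n$, and the weights $(\lambda_{n})$ together with the sizes of the prime blocks $N_{n}$ must be balanced so that \emph{both} abscissas hit their limiting values simultaneously rather than merely approach them. The case $\cot(X)=\infty$ is the most delicate since one has to couple the growth of the representability parameter $q_{n}$ with the polynomial degrees $n$ to produce a single series witnessing the extremal value of the supremum.
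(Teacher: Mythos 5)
Your upper bound is correct, and it is a genuinely different (shorter) route than the paper's: you deduce $\sigma_{a}(D)-\sigma_{\unc}(D)\leq \sigma_{a}(D)-\sigma_{u}(D)\leq 1-1/\cot(X)$ from \eqref{EQUA:threeAbcissas} and the known vector-valued Bohr--Bohnenblust--Hille theorem \eqref{tempestade}, whereas the paper proves the bound directly and self-containedly by combining H\"older's inequality with the cotype $q$ inequality (your ``alternative reading'' via Proposition~\ref{Prop:stripMauQueff} is essentially that same computation). Both are valid; the paper's version avoids invoking \eqref{tempestade} as a black box.

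The lower bound together with the attainment is where your proposal has a genuine gap, and you say so yourself: the amalgamation $D^{\ast}=\sum_{n}\lambda_{n}P_{n}$ is never actually carried out, the balancing of $(\lambda_{n})$ against the block sizes is left as ``one can arrange'', and the case $\cot(X)=\infty$ is deferred. Since the attainment is the substantive content of the theorem beyond the numerical identity, this is not a detail one can wave at. The paper's construction shows how to close it, and the devices it uses are exactly the ones your sketch lacks. First, one takes a strictly increasing sequence $q_{m}\nearrow\cot(X)$ and, for each $m$, vectors $(x_{\pr})_{\pr\in A_{m}}$ satisfying only the one-sided estimates \eqref{EQUA:coefficientChoice} (an upper $\ell_{q_{m}}$ and a lower $\sup$ estimate, from \cite[Theorem~14.5]{DiJaTo95}); this removes your worry about tracking almost-isometry constants uniformly and treats $\cot(X)=\infty$ with no separate argument, since then $q_{m}\to\infty$ and $1/q_{m}\to 0$. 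Second, the series is supported only on primes and the ``weights'' are simply $\pr^{-1/q_{m}}$ on the block $A_{m}$: the blocks $A_{m}$ are chosen so that $\sum_{\pr\in A_{m}}1/\pr\geq 1$, which by divergence of $\sum 1/\pr$ forces the norm series to diverge at every $\Real(s)<1-1/\cot(X)$ (so $\sigma_{a}(D)\geq 1-1/\cot(X)$ exactly, not just in the limit), while $\sum_{\pr\in A_{m}}\pr^{-(1+q_{m}/m)}$ is made small enough that the upper $\ell_{q_{m}}$ estimate gives summable sign-uniform block norms for every $\sigma>0$, whence $\sigma_{\unc}(D)\leq 0$. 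The already proven upper bound \eqref{time} then forces both abscissas to equal their extremal values simultaneously, which is precisely the ``hit rather than approach'' issue you identified but did not resolve.
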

\begin{proof}
Proceeding exactly as in the scalar case we have that $\sigma_{a}(D) - \sigma_{c}(D) \leq 1$ for every $X$-valued Dirichlet series. Then the upper bound is trivial if $\cot{(X)} = \infty$. Now, if $X$ has cotype $2 \leq q < \infty$, we take $\sum_{n} a_{n}n^{-s}$ that converges unconditionally at some $\sigma \in \mathbb{R}$. Then by H\"older's inequality and the definition of cotype (we write $C_{q}(X)$ for the cotype $q$ constant of $X$, $\varepsilon_{n}$ for independent identically distributed Rademacher random variables defined on some probability space $\Omega$ and $q' = q/(q-1)$) we have, for $N<M$
\begin{multline*}
\sum_{n=N}^{M} \frac{\|a_{n}\|}{n^{\sigma + \delta}}  
\leq \bigg( \sum_{n=N}^{M} \Big( \frac{\| a_{n}\|}{n^{\sigma}} \Big)^{q} \bigg)^{1/q} 
\bigg( \sum_{n=N}^{M}{\frac{1}{n^{\delta q'}}} \bigg)^{1/q'}\\  
\leq C_{q}(X) \int_{\Omega} \Big\Vert  \sum_{n=N}^{M} \varepsilon_{n}(\omega)  \frac{ a_{n}}{n^{\sigma}}  \Big\Vert d \omega \bigg( \sum_{n=N}^{M}{\frac{1}{n^{\delta q'}}} \bigg)^{1/q'} \\
\leq C_{q}(X)  \: \sup_{\varepsilon_{n} = \pm 1}  \bigg\| \sum_{n=N}^{M} \varepsilon_{n} \frac{ a_{n}}{n^{\sigma}}   \bigg\| \,\, \bigg( \sum_{n=N}^{M}{\frac{1}{n^{\delta q'}}} \bigg)^{1/q'} \,.
\end{multline*}
Since $\sum_{n} a_{n} / n^{\sigma}$ is unconditionally convergent the supremum is finite and, being $\delta q' > 1$ we conclude that the series $\sum_{n} \| a_{n}\| n^{- (\sigma + \delta)}$ is convergent. This gives
\begin{equation} \label{time}
\sigma_{a}(D) - \sigma_{\unc}(D) \leq  1 - \frac{1}{\cot(X)}
\end{equation}
for every Dirichlet series.\\
For the lower estimate we are going to construct a Dirichlet series for which 
\begin{equation}\label{fade}
\sigma_{a}(D) = 1 - 1/\cot(X) \quad \mbox{ and } \quad \sigma_{\unc}(D) = 0 \,.
\end{equation}
To begin with we fix a strictly increasing sequence of positive numbers $(q_{m})$ converging to $\cot{(X)}$ and a sequence of finite sets of prime numbers $(A_{m})$ satisfying, for each $m \in \N$, that 
\begin{equation} \label{EQUA:indexChoice}
\max{A_{m}} < \min{A_{m+1}} \,, \quad \sum_{\pr \in A_{m}} \frac{1}{\pr^{1 + \frac{q_{m}}{m}}}  < \frac{1}{2^{\frac{q_{m}}{m}}} \: \text{ \hspace{3mm}  and\hspace{3mm} } \sum_{\pr \in A_{m}} \frac{1}{\pr} \geq 1.
\end{equation}
This construction can be easily done by induction using that the series $\sum_{n}{1/\pr_{n}^{\alpha}}$  converges for $\alpha > 1$ and diverges for $\alpha = 1$. \\
By \cite[Theorem~14.5]{DiJaTo95}, given complex numbers  we can find $(x_{\pr})_{\pr \in A_{m}}$ in $X$ such that 
\begin{equation}  \label{EQUA:coefficientChoice}
\frac{1}{2} \sup_{\pr \in A_{m}} \vert a_{\pr} \vert 
\leq \Big\| \sum_{\pr \in A_{m}}{x_{\pr} a_{\pr}} \Big\| 
\leq \Big( \sum_{\pr \in A_{m}} \vert a_{\pr} \vert^{q_{m}} \Big)^{1/q_{m}} \quad \mbox{ for every $(a_{\pr})_{\pr \in A_{m}}$ in $\mathbb{C}$}\,.
\end{equation}
We now define the following Dirichlet series
\begin{equation} \label{eco}
D = \sum_{m} \sum_{\pr \in A_{m}}{\Big( \frac{x_{\pr}}{\pr^{1/q_{m}}} \Big) \frac{1}{\pr^{s}}} \,. 
\end{equation}
Since $1/2 \leq \| x_{\pr}\| \leq 1$ for every $\pr \in \bigcup_{m}{A_{m}}$, the series in \eqref{eco} converges absolutely at a given  $s \in \mathbb{C}$ if and only if
\[ 
\sum_{m} \sum_{\pr \in A_{m}} \frac{1}{\pr^{\Real{(s)} + \frac{1}{q_{m}}}}
\]
converges. If this is the case, by \eqref{EQUA:indexChoice} there must be $m_{0} \in \mathbb{N}$ such that $\Real(s) + 1/q_{m} > 1$ for every $m \geq m_{0}$. But this implies  $\Real(s) \geq 1 - 1/\cot(X)$ and hence $\sigma_{a}(D) \geq 1 - 1/\cot(X)$.\\
On the other hand, for each $\sigma >0$ we can find $m_{0} \in \N$ such that $m \sigma > 1$ whenever $m \geq m_{0}$. Then, for every sequence of signs $\varepsilon_{n} = \pm1$ we have, using \eqref{EQUA:indexChoice} and \eqref{EQUA:coefficientChoice}, that
\[
\Big\| \sum_{\pr \in A_{m}} \frac{\varepsilon_{\pr} x_{\pr}}{\pr^{\sigma + 1/ q_{m}}} \Big\| 
\leq \Big( \sum_{\pr \in A_{m}} \frac{1}{\pr^{1 + q_{m} \sigma}} \Big)^{1/q_{m}} 
\leq \Big( \sum_{\pr \in A_{m}} \frac{1}{\pr^{1 + q_{m}/m}} \Big)^{1/q_{m}} < \frac{1}{2^{m}}
\]
for all $m \geq m_{0}$. This shows that $D$ converges unconditionally at $\sigma$ and therefore $\sigma_{\unc}(D) \leq 0$. Applying now \eqref{time} to the proven estimations we conclude that  \eqref{fade} holds.
\end{proof}

We have separated the abscissas of unconditional and absolute convergence, showing that they may not be equal. The question we address now is the relation between the abscissas of unconditional and uniform 
convergence. They may also be different to each other. Using Corollary~\ref{kina} we can easily pass from scalar to vector-valued Dirichlet series.

\begin{Prop}\label{Prop:unconditionalUniformAbcissa}
Let $X$ be a Banach space. Then
\[
\sup \sigma_{\unc}(D) - \sigma_{u}(D) = \frac{1}{2} \,,
\]
where the supremum is taken over all Dirichlet series with coefficients in $X$. Moreover, the supremum is attained. \\

\end{Prop}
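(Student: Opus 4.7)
My plan is to reduce the statement to the classical scalar Bohnenblust--Hille strip theorem \eqref{collserola} through the weak--abscissa identifications developed above, and then lift back to $X$-valued series by tensoring with a norm-one vector.

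For the upper bound I would combine Lemma~\ref{Lemm:uncondAbcissa} ($\sigma_{\unc}=\sigma_a^{\weak}$) with Corollary~\ref{kina} ($\sigma_u^{\weak}=\sigma_u$) to rewrite the difference as
\[
\sigma_{\unc}(D) - \sigma_u(D) \,=\, \sup_{\|x^*\|<1}\sigma_a(D_{x^*}) \,-\, \sup_{\|x^*\|<1}\sigma_u(D_{x^*}),
\]
and then invoke the elementary inequality $\sup f - \sup g \leq \sup(f-g)$ followed by the scalar bound $\sigma_a(D_{x^*}) - \sigma_u(D_{x^*}) \leq 1/2$ coming from \eqref{collserola}. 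This yields $\sigma_{\unc}(D)-\sigma_u(D)\leq 1/2$ for every $D \in \mathfrak{D}(X)$, uniformly in $X$.

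For the matching lower bound with attainment I would take a scalar Dirichlet series $D_0 = \sum a_n n^{-s}$ realising the scalar equality $\sigma_a(D_0) - \sigma_u(D_0) = 1/2$, fix any $x \in X$ with $\|x\|=1$, and set $D := \sum a_n x \, n^{-s}$. Since $\|a_n x\|_X = |a_n|$, the norms of the vector-valued partial sums factor out and one reads off $\sigma_u(D) = \sigma_u(D_0)$ and $\sigma_a(D) = \sigma_a(D_0)$. Choosing $x^* \in X^*$ with $x^*(x) = 1$ via Hahn--Banach, the test series is $D_{x^*} = D_0$, so $\sigma_{\unc}(D) = \sigma_a^{\weak}(D) \geq \sigma_a(D_0)$, while \eqref{EQUA:threeAbcissas} supplies the reverse inequality $\sigma_{\unc}(D) \leq \sigma_a(D) = \sigma_a(D_0)$. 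Hence $\sigma_{\unc}(D) - \sigma_u(D) = 1/2$ is attained.

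The main obstacle, and the only delicate point, is producing a single scalar Dirichlet series $D_0$ that witnesses the scalar Bohnenblust--Hille equality exactly rather than asymptotically. I would handle this by a diagonal gluing argument: assemble Bohnenblust--Hille type Dirichlet polynomials $P_k$ supported on pairwise disjoint blocks of primes, normalised so that $\|P_k\|_{\mathcal{D}_\infty(\mathbb{C})} \leq 1$ while $\sum_n |a_n^{(k)}|/n^{1/2 - 1/k}$ grows like $k^3$, and set $D_0 := \sum_k P_k/k^2$. Prime disjointness keeps the absolute sums additive, so the $k$-th block forces divergence of $\sum |a_n|/n^\sigma$ for every $\sigma < 1/2$, while the $1/k^2$ weights preserve uniform convergence on $\mathbb{C}_0$. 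Once $D_0$ is in hand the remainder of the proof is just bookkeeping within the weak--abscissa framework set up in Section~3.
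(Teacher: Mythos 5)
Your argument is correct and follows essentially the same route as the paper: the upper bound combines Lemma~\ref{Lemm:uncondAbcissa}, Corollary~\ref{kina} and the scalar bound \eqref{collserola}, and the lower bound tensors an extremal scalar series with a norm-one vector and a Hahn--Banach functional. The only difference is that you sketch a disjoint-prime-block construction of the extremal scalar series $D_0$, whereas the paper simply quotes its existence from \cite[Theorem~VII]{BoHi31}; your sketch is the standard construction behind that theorem, so nothing essential is added or missing.
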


\begin{proof}
Let us point out that if $X = \mathbb{C}$, then both statements hold true \cite[Sections~5 and ~6]{BoHi31} (recall that in this case we have $\sigma_{\unc} = \sigma_{a}$). We take $D \in \mathfrak{D}(X)$ and use this fact and Lemma~\ref{Lemm:uncondAbcissa} to have
\[ 
\sigma_{\unc}(D) = \sup_{x^{\ast} \in B_{X^{\ast}}}  \sigma_{a}(D_{x^{\ast}}) 
\leq \sup_{x^{\ast} \in B_{X^{\ast}}} \sigma_{u}(D_{x^{\ast}}) + \frac{1}{2} 
= \sigma_{u}(D) + \frac{1}{2} \,. 
\]
On the other hand, \cite[Theorem~VII]{BoHi31} provides a Dirichlet series $\sum_{n} \lambda_{n}n^{-s}$ (with $\lambda_{n} \in \mathbb{C}$) such that $\sigma_{\unc} - \sigma_{u} = \frac{1}{2}$. Choosing 
$x \in X$ with $\Vert x \Vert = 1$ and defining $D=\sum_{n} (\lambda_{n}x) n^{-s}$ we have a Dirichlet series with values in $X$ satisfying $\sigma_{\unc}(D) - \sigma_{u}(D) = \frac{1}{2}$. 
\end{proof}

\begin{Rema} \label{xerxes}
Following standard notation, for a given $n \in \mathbb{N}$ the function $\Omega (n)$ counts the number of prime divisors, considered with multiplicity. Then, a Dirchlet series $\sum a_{n} n^{-s}$ is called $m$-homogeneous if 
$a_{n}=0$ for every $n$ with $\Omega (n) \neq m$. The solution to Bohr's problem \eqref{collserola} given in \cite{BoHi31} went through showing that, if the supremum is taken over all $m$-homogeneous Dirichlet series with 
scalar coefficients, then
\[
\sup \{\sigma_{a}(D) - \sigma_{u} (D)\} = \frac{m-1}{2m} \,.
\]
When the study was taken to Dirichlet series in infinite dimensional Banach spaces, it came as a striking result that the dependence on the degree of homogeneity vanishes and
\[
\sup \{ \sigma_{a}(D) - \sigma_{u} (D) \} =  1 - \frac{1}{\cot (X)} \,,
\] 
where the supremum is now taken over all $m$-homogeneous Dirichlet series with coefficients in $X$.\\
If we consider the same question, this time with the abscissa of unconditional convergence, then we can proceed as in the proof of Proposition~\ref{Prop:unconditionalUniformAbcissa} to see that the dependence on the degree again appears and, if we consider the supremum over all $m$-homogeneous Dirichlet series with coefficients in $X$, then
\[
\sup \{ \sigma_{\unc}(D) - \sigma_{u}(D) \} = \frac{m-1}{2m} \,.
\]
\end{Rema}

We finish the section by constructing Dirichlet series for which $\sigma_{a}$ and $\sigma_{u}$ has the maximum difference, while $\sigma_{\unc}$ can be any value allowed by Proposition~\ref{Prop:unconditionalUniformAbcissa}. We need first a lemma.

\begin{Lemm}
\label{Lemm:subspaceCotype}
If $X$ is an infinite dimensional Banach space, then it contains a proper closed subspace $Y \subset X$ with $\cot(Y) = \cot(X)$.
\end{Lemm}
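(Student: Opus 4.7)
The plan is to take $Y$ to be a closed hyperplane in $X$. Concretely, I would pick any $x_{0} \in X$ with $\|x_{0}\| = 1$ and, via Hahn--Banach, a continuous functional $x^{*} \in X^{*}$ with $x^{*}(x_{0}) = 1$. Setting $Y = \ker(x^{*})$ immediately gives a closed subspace of codimension one, hence a proper closed subspace of $X$.

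Next, I would observe that the decomposition $x = \bigl(x - x^{*}(x)\, x_{0}\bigr) + x^{*}(x)\, x_{0}$ exhibits $X$ as the topological direct sum $Y \oplus \mathbb{C}\cdot x_{0}$, with continuous projections onto each summand. The heart of the argument is then the classical direct-sum identity
\[
\cot(Y \oplus Z) \,=\, \max\bigl(\cot(Y), \cot(Z)\bigr),
\]
whose ``$\leq$'' direction follows by applying the cotype-$q$ inequality in each coordinate, combined with Minkowski's inequality and the continuity of the projections, while the ``$\geq$'' direction is simply the fact that closed subspaces inherit cotype.

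Applied with $Z = \mathbb{C}$, and using the elementary fact $\cot(\mathbb{C}) = 2 \leq \cot(Y)$ (which holds by the very definition of $\cot$ recalled in the paper, since $\cot$ is always at least $2$), the maximum reduces to $\cot(Y)$, so that $\cot(X) = \cot(Y\oplus\mathbb{C}) = \cot(Y)$. The same computation handles uniformly the case $\cot(X) = \infty$: if $\cot(Y)$ were finite, then the maximum would be finite, contradicting $\cot(X) = \infty$. Since the direct-sum identity for cotype is classical, there is essentially no obstacle in the plan; the argument could in fact be iterated to produce closed subspaces of any finite codimension with the same cotype, but codimension one already suffices for the statement of the lemma.
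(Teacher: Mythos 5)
Your proposal is correct and follows essentially the same route as the paper: take the kernel of a norm-one functional as the hyperplane $Y$, identify $X$ with the topological direct sum $Y \oplus \mathbb{K}$, and conclude from the stability of cotype under finite direct sums. The only (immaterial) difference is in how the inequality $\cot(Y\oplus\mathbb{K})\leq\cot(Y)$ is justified: you prove the two-summand identity $\cot(Y\oplus Z)=\max(\cot(Y),\cot(Z))$ directly via the coordinate projections, whereas the paper embeds $\mathbb{K}$ into $Y$ and invokes $\cot(\ell_2^2(Y))\leq\cot(Y)$ from Diestel--Jarchow--Tonge.
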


\begin{proof}
Take a norm-attaining element $x_{0}^{\ast} \in X^{\ast}$ with $\Vert x_{0}^{\ast} \Vert_{X^{\ast}} =1$  and consider its kernel $Y:=\ker(x_{0}^{\ast})$. Then $Y$ must be proximinal, so every $x \in X$ can be written as $y + \lambda z$ for a unique pair of elements $y \in Y$ and $\lambda \in \mathbb{K}$. This gives a natural isomorphism $X \cong Y \oplus \mathbb{K}$ which shows that $\cot(X) = \cot(Y \oplus \mathbb{K})$. Moreover $\cot(Y \oplus \mathbb{K}) \leq \cot(Y \oplus Y)$ simply identifying $\mathbb{K} \cong \mathbb{K}y_{0} \subset Y$ for some $y_{0} \in S_{Y}$. Finally $\cot(Y \oplus Y) = \cot(\ell_{2}^{2}(Y)) \leq \cot(Y)$, where the last inequality follows from \cite[Theorem~11.12--(b)]{DiJaTo95}.
\end{proof}

\begin{Coro}
Let $X$ be an infinite dimensional Banach space. For every $0 \leq \alpha \leq 1/2$ there exists a Dirichlet series $D$ such that
\[ \sigma_{u}(D) = 0, \hspace{5mm} \sigma_{\unc}(D) = \alpha, \hspace{5mm} \sigma_{a}(D) = 1 - 1/\cot{(X)} \]
\end{Coro}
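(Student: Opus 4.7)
The plan is to realize $D$ as a sum of two Dirichlet series living in complementary subspaces of $X$, each engineered to contribute the prescribed value to one specific abscissa. Invoke Lemma~\ref{Lemm:subspaceCotype} and pick a norm-attaining $x_{0}^{\ast} \in X^{\ast}$ with $\Vert x_{0}^{\ast} \Vert = 1$, letting $Y = \ker(x_{0}^{\ast})$, a proper closed subspace of $X$ with $\cot{(Y)} = \cot{(X)}$. Choose $x_{0} \in X$ with $\Vert x_{0} \Vert = 1$ and $x_{0}^{\ast}(x_{0}) = 1$, so that $X = Y \oplus \mathbb{K} x_{0}$ topologically, with bounded coordinate projections $P_{Y}(x) = x - x_{0}^{\ast}(x) x_{0}$ and $P_{\mathbb{K}}(x) = x_{0}^{\ast}(x) x_{0}$.

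The first summand $D_{Y} \in \mathfrak{D}(Y)$ comes from applying Theorem~\ref{Prop:stripUnconditional} to $Y$: it satisfies $\sigma_{a}(D_{Y}) = 1 - 1/\cot{(Y)} = 1 - 1/\cot{(X)}$ and $\sigma_{\unc}(D_{Y}) = 0$, and hence $\sigma_{u}(D_{Y}) \leq 0$ by \eqref{EQUA:threeAbcissas}. The second summand is a scalar Dirichlet series $D_{\mathbb{K}}$ with $\sigma_{a}(D_{\mathbb{K}}) = \alpha$ and $\sigma_{u}(D_{\mathbb{K}}) = 0$; note that $\sigma_{\unc}(D_{\mathbb{K}}) = \sigma_{a}(D_{\mathbb{K}}) = \alpha$ is automatic in the scalar case. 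To build it, take the Bohnenblust--Hille extremal scalar series $\sum \lambda_{n} n^{-s}$ from \cite[Theorem~VII]{BoHi31} (with $\sigma_{a} = 1/2$ and $\sigma_{u} = 0$), translate it by $1/2 - \alpha$ to produce $\sum \lambda_{n} n^{-(1/2-\alpha)} n^{-s}$ with $\sigma_{a} = \alpha$ and $\sigma_{u} = \alpha - 1/2$, and then superimpose $\zeta(s+1) - 1 = \sum_{n \geq 2} n^{-1} n^{-s}$ (for which $\sigma_{a} = \sigma_{u} = 0$ because of the pole of $\zeta$ at $1$) to raise the uniform abscissa back to $0$. A short triangle-inequality bookkeeping, using that the translated Bohnenblust--Hille series is recovered as the difference of $D_{\mathbb{K}}$ and $\zeta(s+1) - 1$, confirms $\sigma_{a}(D_{\mathbb{K}}) = \alpha$ and $\sigma_{u}(D_{\mathbb{K}}) = 0$ for every $\alpha \in (0, 1/2]$; the edge case $\alpha = 0$ is handled by using $D_{\mathbb{K}} = \zeta(s+1) - 1$ alone.

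Finally, assemble $D = D_{Y} + D_{\mathbb{K}} x_{0} \in \mathfrak{D}(X)$. Since bounded linear maps preserve absolute, unconditional and uniform convergence of series, the projections $P_{Y}$ and $P_{\mathbb{K}}$ show that $D$ converges at a point $s$ in any of these three modes if and only if both $D_{Y}$ and $D_{\mathbb{K}} x_{0}$ do. Consequently $\sigma_{\bullet}(D) = \max\{\sigma_{\bullet}(D_{Y}), \sigma_{\bullet}(D_{\mathbb{K}})\}$ for $\bullet \in \{a, \unc, u\}$, which evaluates to $\sigma_{u}(D) = 0$, $\sigma_{\unc}(D) = \alpha$, and $\sigma_{a}(D) = 1 - 1/\cot{(X)}$ (absorbing $\alpha \leq 1/2 \leq 1 - 1/\cot{(X)}$, which holds since $\cot{(X)} \geq 2$). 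The main difficulty is concentrated in Step~2: manufacturing a scalar Dirichlet series whose absolute abscissa lands exactly at $\alpha$ and whose uniform abscissa lands exactly at $0$, throughout the whole range $\alpha \in [0, 1/2]$. The translation alone pushes $\sigma_{u}$ below $0$ when $\alpha < 1/2$, and one must verify that the $\zeta$-correction used to lift $\sigma_{u}$ back to $0$ neither inflates $\sigma_{a}$ beyond $\alpha$ nor interacts destructively with the Bohnenblust--Hille part.
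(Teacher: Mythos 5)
Your construction is correct and follows the same blueprint as the paper's: split $X=Y\oplus\mathbb{K}x_{0}$ via Lemma~\ref{Lemm:subspaceCotype}, put the extremal series \eqref{eco} of Theorem~\ref{Prop:stripUnconditional} in the $Y$-slot and a translated Bohnenblust--Hille series in the scalar slot, and read off each abscissa of the sum as the maximum of the abscissas of the two components (your norm-one projections $P_{Y}$, $P_{\mathbb{K}}$ are exactly what the paper's explicit triangle-inequality estimates, which exploit $d(x_{0},Y)=1$, encode). The one genuine difference is how $\sigma_{u}(D)=0$ is secured. The paper keeps the scalar part at $\sigma_{u}(D_{2})=\alpha-1/2\leq 0$ and instead uses that the $Y$-valued series \eqref{eco} has $\sigma_{u}(D_{1})=0$ exactly --- a fact not contained in the statement of Theorem~\ref{Prop:stripUnconditional}, and which rests on \eqref{eco} being supported on the primes, so that $\sigma_{u}=\sigma_{\unc}$ by the $1$-homogeneous case recalled in Remark~\ref{xerxes}. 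You avoid that input entirely: you only need $\sigma_{u}(D_{Y})\leq 0$ and you force the scalar slot to have $\sigma_{u}$ exactly $0$ by superimposing $\zeta(s+1)-1$. This works, and the cancellation worry you flag is harmless for $0<\alpha<1/2$: the translated Bohnenblust--Hille piece has $\sigma_{u}=\alpha-1/2<0$ while $\sum_{n\geq 2}n^{-1-s}$ has $\sigma_{c}=\sigma_{u}=\sigma_{a}=0$, and when two Dirichlet series have \emph{distinct} abscissas the sum inherits the larger one; the same argument pins $\sigma_{a}(D_{\mathbb{K}})$ at $\max\{\alpha,0\}=\alpha$. The only place your argument as written degenerates is $\alpha=1/2$, which you include in your claimed range: there both summands of $D_{\mathbb{K}}$ have $\sigma_{u}=0$, so the ``max of distinct abscissas'' rule only yields $\sigma_{u}(D_{\mathbb{K}})\leq 0$ and a destructive interaction is not excluded. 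The fix is immediate --- for $\alpha=1/2$ drop the $\zeta$-correction altogether, since the untranslated Bohnenblust--Hille series already has $\sigma_{u}=0$ and $\sigma_{a}=1/2$ --- and with that one-line amendment your proof is complete.
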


\begin{proof}
By Lemma~\ref{Lemm:subspaceCotype}, we can find a proximinal hyperplane $Y$ with $\cot(Y) = \cot(X)$ and $x_{0} \in S_{X}$ with $d(x_{0},Y)=1$. Let $D_{1} = \sum_{n} a_{n} n^{-s}$ be the Dirichlet series with coefficients in $Y$ defined as in \eqref{eco}. By \cite[Theorem~VII]{BoHi31} we can find a scalar-valued Dirichlet series $D_{2} = \sum_{n} \lambda_{n}n^{-s}$ with $\sigma_{a}(D_{2}) = \alpha$ and $\sigma_{u}(D_{2}) = \alpha - 1/2$. Define the Dirichlet series $D$ by
\[ 
 \sum_{n} \frac{a_{n} + \lambda_{n}x_{0}}{n^{s}} \,. 
 \]
The inequality 
\[ 
\sum_{n=N}^{M} \frac{\|a_{n}\|}{n^{\sigma}} 
\leq \sum_{n=N}^{M} \frac{\| a_{n} + \lambda_{n}x_{0} \|}{n^{\sigma}} + \sum_{n=N}^{M} \frac{|\lambda_{n}|}{n^{\sigma}} 
\] 
implies that $1- 1/\cot(X) = \sigma_{a}(D_{1}) \leq \max \{ \sigma_{a}(D), \sigma_{a}(D_{2}) = \alpha \}$, so we deduce that $\sigma_{a}(D) = 1 - 1/\cot(X)$ if $\cot(X)>2$ or $\alpha < 1/2$. Otherwise, $1/\alpha = \cot(X) = 2$ and then $\sigma_{a}(D) = 1/2$ since
\[ 
\sum_{n=N}^{M} \frac{|\lambda_{n}|}{n^{\sigma}} 
\leq \sum_{\substack{N \leq n \leq M \\ \lambda_{n} \neq 0}} \frac{|\lambda_{n}| \cdot \| x_{0} + a_{n}/\lambda_{n} \|}{n^{\sigma}} 
\leq \sum_{n=N}^{M} \frac{\| a_{n} + \lambda_{n} x_{0}\|}{n^{\sigma}} 
\]
implies that $1/2=\sigma_{a}(D_{2}) \leq \sigma_{a}(D) \leq 1/2$. \\
To estimate $\sigma_{\unc}(D)$, notice that for every sequence of signs $\varepsilon_{n} = \pm 1$ for $n \in \N$ and natural numbers $N < M$ we have 
\[
\Big| \sum_{n=N}^{M} \frac{\varepsilon_{n} \lambda_{n}}{n^{\sigma}} \Big| 
\leq \Big\| \sum_{n=N}^{M} \frac{\varepsilon_{n} a_{n}}{n^{\sigma}} 
+ x_{0} \sum_{n=N}^{M} \frac{\lambda_{n} \varepsilon_{n}}{n^{\sigma}} \Big\| 
= \Big\| \sum_{n=N}^{M} \varepsilon_{n} \frac{a_{n} + \lambda_{n}x_{0}}{n^{\sigma}} \Big\| \,. 
\]
With this we deduce that $\sigma_{\unc}(D) \geq \sigma_{\unc}(D_{2}) = \sigma_{a}(D_{2}) = \alpha$. Since $\sigma_{\unc}(D_{1}) = 0 \leq \alpha$ we conclude that $\sigma_{\unc}(D) = \alpha$. Finally, the fact that $\sigma_{u}(D_{1}) = 0$ and $\sigma_{u}(D_{2}) \leq 0$ imply that $\sigma_{u}(D) = 0$.
\end{proof}

\noindent \textbf{Acknowledgement.}
We would like to thank the referee for her/his careful reading and helpful suggestions.

\end{document}